\documentclass[10pt]{amsart}

\usepackage[top=3.5cm,bottom=3.5cm,left=3.7cm,right=3.7cm]{geometry}               
\usepackage{amsmath,amscd,amssymb,amsthm}
\usepackage[english]{babel}
\usepackage{mathtools}
\usepackage{enumerate}
\usepackage{setspace}
\usepackage{mathrsfs}
\usepackage[numbers]{natbib}
\usepackage[hidelinks]{hyperref}
\usepackage{csquotes}
\usepackage{tkz-base}

\author{{\textbf{Giacomo Micheli}}} 

\title{\Large{On the Riemann Hypothesis for Drinfeld Modules}}

\usepackage{fancyhdr}
\pagestyle{fancy}

\lhead{Drinfeld Modules}
\rhead{}

\DeclareMathOperator{\End}{End}

\def\fp{\mathfrak{p}}
\def\fl{\mathfrak{l}}
\def\fq{\mathfrak{q}}

\def\vF{\mathbb{F}}
\def\vN{\mathbb{N}}
\def\vZ{\mathbb{Z}}

\def\vR{\mathbb{R}}

\urldef\myurlcrm\url{ http://www.crm.cat/en/activities/pages/activityfoldersandpages/curs}%202013-2014/wk%20polynomials/workshop-on-polynomials-over-finite-fields.aspx}

\newtheorem{theorem}{Theorem}

\newtheorem{lemma}[theorem]{Lemma}
\newtheorem{corollary}[theorem]{Corollary}

\newtheorem{proposition}[theorem]{Proposition}
\theoremstyle{definition}
\newtheorem{definition}[theorem]{Definition}

\newtheorem{remark}[theorem]{Remark}

\onehalfspacing

\makeindex
%\onehalfspacing

% \author[G. Micheli]{Giacomo Micheli}
%% \address{Institute of Mathematics\\ 
% University of Zurich\\
% Winterthurerstrasse 190\\
% 8057 Zurich, Switzerland\\
% }
% \email{giacomo.micheli@math.uzh.ch}

\date{}

\begin{document}

\begin{abstract}
In this paper we provide a short proof of the Riemann Hypothesis for Drinfeld modules which uses only basic notions from the theory of global function fields and of Drinfeld modules.
\end{abstract}

\maketitle

\section{Introduction}

The Riemann Hypothesis for Drinfeld modules (RHfDM) is a classical result in algebraic number theory and has been proved in various ways: see for example, \cite{drinfel1977elliptic,laumon1996cohomology,Papikian2023,yu1995isogenies}  to name some. One of the most accessible is the one in \cite[Chapter 4]{Papikian2023}, which nevertheless uses some deep facts on the theory of cyclic algebras (see for example \cite[Proposition 4.1.1, Theorem 4.1.3]{Papikian2023}).
By contrast, aside from standard algebra facts, the statement of RHfDM only requires quite basic knowledge of function fields, and Drinfeld module theory (essentially only the definition of Tate module), see Theorem \ref{thm:RH}. The purpose of this paper is to produce a proof that does not require advanced tools: in fact, we produce a five-page proof that only requires basic facts from the theory of extensions of global fields and basic facts on Drinfeld modules (essentially, only \cite[Theorem 3.6.6]{Papikian2023}).

Set $k=\vF_{q^n}$.
Let $k\{\tau\}$ be the skew polynomial ring in $k$ defined by the rule $\tau \alpha=\alpha^q\tau$, for any $\alpha\in k$.
Let $A=\vF_q[T]$ and $\fl$ be a prime ideal of $A$. With abuse of notation, we will indicate with $\fl$ both $\fl$ and its unique monic generator. 
Let $\phi:\vF_q[T]\rightarrow k\{\tau\}$ be a Drinfeld module over a finite field $k$, $\phi_{f(T)}$ be the image of $f(T)$, and $\End(\phi)$ be the ring of endomorphisms of $\phi$ (i.e.\ the ring consisting of the elements of $k\{\tau\}$ that commute with $\phi_T$). 
Let $P_{\phi,\pi}(T,x)$ be the characteristic polynomial of the Frobenius $\pi=\tau^n\in \End(\phi)$ as an  $A_\fl$-linear map acting on the Tate module $T_{\fl}(\phi)$. It is a well known fact that $P_{\phi,\pi}(T,x)$ belongs to $A[x]$ and is independent of $\fl$ (see for example \cite[Theorem 3.6.6]{Papikian2023}). If $a\in A$, let us denote by $(a)$ the ideal generated by $a$.
If $\phi$ is a Drinfeld module, let $\phi_T=\sum^r_{i=0}g_i \tau^i$.  The characteristic of $\phi$ is defined to be the minimal polynomial over $\vF_q$ of $g_0$.
Let $L$ be a global field, i.e. a finite extension of $\vF_q(T)$, and let $K$ be a subfield of $L$. Then we define the norm function as 
\[N_{L:K}:L\rightarrow K\]
\[x\mapsto \det(M_x)\]
where $M_x$ is the $K$-linear map on $L$ defined by the multiplication by $x$. Whenever the extension is understood, we will simply write $N$ instead of $N_{L:K}$.
If $\infty$ is the place at infinity of $\vF_q(T)$ let $v_\infty$ be the normalized valuation on $\vF_q(T)$ defined by $v_{\infty}(f/g)=\deg(g)-\deg(f)$.

\begin{theorem}[Riemann Hypothesis for Drinfeld Modules]\label{thm:RH}
Let $\vF_q$ be a finite field and $k$ be a finite extension of $\vF_q$.
Set $A=\vF_q[T]$.
Let 
\[\phi:A\longrightarrow k\{\tau\}\]
\[T\mapsto \sum^r_{i=0}g_i \tau^i\]
 be a Drinfeld module of rank $r$. Let $n=[k:\vF_q]$ and $d=\deg(\fp)$, where $\fp$ is the characteristic of $\phi$. 
Let
\[P_{\phi,\fl}(T,x)=x^r+\sum^{r-1}_{i=0}a_i x^i\in A[x]\]
be the characteristic polynomial of the Frobenius on the Tate module $T_{\fl}(\phi)$ with $\fl\neq \fp$.
Let $M$ be the splitting field of $P_{\phi,\fl}$. Let $\alpha\in M$ be a root of $P_{\phi,\fl}$, and $L=\vF_q(\alpha,T)$.
Consider the map
\begin{align*}
|\cdot|_{*}:L&\longrightarrow \mathbb R^{+}\\
u &\mapsto  |u|_{*}:=q^{-v_{\infty}(N_{\vF_q(u,T):\vF_q(T)}(u))/[\vF_q(u,T):\vF_q(T)]}.
\end{align*} 
Then, $|\cdot|_*$ is an absolute value. Furthermore, we have that
\begin{enumerate}
\item for any root $\alpha$  of $P_{\phi,\fl}(x)$ in $M$ we have that $|\alpha|_*=q^{n/r}$.
\item for any $i\in \{0,\dots, r-1\}$ we have that 
\[\deg(a_i)\leq (r-i)n/r\]
\item  $(a_0)=(\mathfrak p^{n/d})$.
\end{enumerate}
\end{theorem}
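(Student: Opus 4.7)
The plan is to import the $\tau$-degree of $k\{\tau\}$ as an auxiliary valuation, use it to extract the absolute value of $\pi$ intrinsically, and then deduce everything from the equation $P_{\phi,\fl}(\pi) = 0$. The function $w(x) := -\deg_\tau(x)/r$ on $k\{\tau\}\setminus\{0\}$ is multiplicative and ultrametric. Because $\phi_T$ has $\tau$-degree exactly $r$, $w \circ \phi = v_\infty$ on $A$. Extending $w$ to the Ore division ring of fractions of $\End(\phi)\otimes_A \vF_q(T)$ and restricting to the commutative subfield $\vF_q(T)(\pi)$ gives a valuation of $\vF_q(T)(\pi)$ extending $v_\infty$ with $w(\pi) = -n/r$. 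This is one extension of $v_\infty$ to $\vF_q(T)(\pi)$; in general there may be several, and $|\cdot|_*$ is the associated weighted geometric mean.

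By standard Tate-module theory (a consequence of \cite[Theorem 3.6.6]{Papikian2023}) $P_{\phi,\fl}$ is a power of the minimal polynomial of $\pi$ over $\vF_q(T)$, so every root of $P_{\phi,\fl}$ in $M$ is Galois conjugate to $\pi$ and $L = \vF_q(\pi,T)$. Combining $w(\pi) = -n/r$ with $P_{\phi,\fl}(\pi) = 0$ read inside $k\{\tau\}$ (inspection of $\tau$-degrees) shows that the Newton polygon of $P_{\phi,\fl}$ at $v_\infty$ is the single segment from $(0,-n)$ to $(r,0)$ of slope $n/r$; in particular $\deg(a_i) \leq (r-i)n/r$, which is (2), and each root has valuation $-n/r$ at every extension of $v_\infty$, so the weighted product in the definition of $|\cdot|_*$ collapses to $|\alpha|_* = q^{n/r}$, giving (1). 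The single-slope structure also forces $|\cdot|_*$ to satisfy the triangle inequality and hence be a non-archimedean absolute value on $L$.

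For (3), the injectivity of $\tau^n : \bar k \to \bar k$ implies that $\pi$ acts bijectively on each finite group $\phi[\fq^m]$, hence on $T_\fq(\phi) \cong A_\fq^r$ through a matrix of unit determinant, so $v_\fq(a_0) = 0$ for every $\fq \neq \fp$. The $\tau^0$-coefficient of $\phi_{a_0} = -\sum_{i\geq 1}\phi_{a_i}\tau^{ni}$ is $a_0(g_0)$, which must vanish, so $\fp \mid a_0$. Combined with $\deg(a_0) = n$ from the Newton polygon, we conclude $a_0 = u\fp^{n/d}$ for some $u \in \vF_q^\times$.

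The main obstacle is the Newton-polygon step: one must both (i) establish the equality $\deg(a_0) = n$ (equivalently, that the polygon is a single segment rather than a broken line bounded by the expected endpoints) and (ii) show that the weighted geometric mean $|\cdot|_*$ satisfies the ultrametric property, i.e.\ that $w$ really does determine $|\cdot|_*$ on $L$. Both reduce to ruling out cancellation of the $\tau^{nr}$-coefficient in $P_{\phi,\fl}(\pi) = 0$ and carefully tracking leading terms in the skew polynomial ring $k\{\tau\}$; this is the single delicate computation separating the argument from routine formal verification.
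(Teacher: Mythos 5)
Your proposal has a genuine gap, and it sits exactly at the point you flag at the end. The valuation $w=-\deg_\tau(\cdot)/r$, extended from $k\{\tau\}$ to its division ring of fractions and restricted to $L=\vF_q(T)(\pi)$, produces \emph{one} extension of $v_\infty$ to $L$ with $w(\pi)=-n/r$; this part is sound and parallels the paper's Theorem \ref{thm:degdetval}. But everything after that needs global information about \emph{all} places of $L$ above $\infty$, and neither $w$ nor the Newton polygon supplies it. First, knowing the value of $\pi$ at a single extension of $v_\infty$ does not determine the Newton polygon of $P_{\phi,\fl}$ at $\infty$ (the conjugates of $\pi$ may a priori have different valuations at that same place), and your proposed reduction to ``no cancellation of the $\tau^{nr}$-coefficient in $P_{\phi,\fl}(\pi)=0$'' cannot be settled by degree inspection alone: the identity $\tau^{nr}=-\sum_{i}\phi_{a_i}\tau^{ni}$ only forces $\deg a_i\geq (r-i)n/r$ for \emph{some} $i$, and ruling out cancellations among leading coefficients of the terms $\phi_{a_i}\tau^{ni}$ is precisely the computation you have not done. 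Second, and more seriously, the claim that a single-slope Newton polygon ``forces $|\cdot|_*$ to satisfy the triangle inequality'' is false as stated: $-\log_q|\cdot|_*$ is a weighted sum of the normalized valuations at the places of $L$ above $\infty$, and such a sum is an absolute value (if and) only if there is exactly one such place -- this is the content of Lemma \ref{lemma:techsplit}. A single-slope polygon does not imply uniqueness of the place: for $q$ odd, $x^2-(T^2+1)$ has a single segment of slope $1$ at $\infty$, yet $\infty$ splits in $\vF_q(T,\sqrt{T^2+1})$, and there the weighted-mean ``absolute value'' violates the ultrametric inequality (take $y$ with $v_{P_1}(y)=-2m$, $v_{P_2}(y)=2m$ and compare $|y+1|_*$ with $\max(|y|_*,|1|_*)$). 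So the absolute-value assertion of the theorem, which concerns arbitrary elements of $L$ and not just $\pi$ and its conjugates, is simply not reached by your argument.

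The paper closes exactly this hole with its technical core: it proves $\deg\det(u)=\deg_\tau(u)$ for \emph{every} endomorphism $u$ by counting $\fl$-primary kernels on torsion (Proposition \ref{prop:l-primary}, Corollary \ref{cor:valuationsep}, with the $u-1$ integrality trick for inseparable $u$ in Theorem \ref{thm:degdetval}); this identifies $v_\infty(N_{L:\vF_q(T)}(\cdot))$ with a constant multiple of $-\deg_\tau(\cdot)$ on all of $\vF_q[u,T]$ (Lemma \ref{lemma:uniqueplaceourcase}), whence by the independence-of-valuations argument of Lemma \ref{lemma:techsplit} there is a unique place above $\infty$. That uniqueness simultaneously gives the absolute-value property, $|\alpha|_*=q^{n/r}$ for every root, the bound $\deg(a_i)\leq (r-i)n/r$ via symmetric functions, and $\deg(a_0)=\deg\det(\pi)=n$ -- without ever confronting the cancellation problem in $k\{\tau\}$. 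Your treatment of part (3) away from $\fp$ (bijectivity of $\tau^n$ on torsion, unit determinant on $T_\fq(\phi)$) matches the paper, and your observation that $\gamma(a_0)=a_0(g_0)=0$ forces $\fp\mid a_0$ is a nice shortcut, but the needed input $\deg(a_0)=n$ again rests on the unproven Newton-polygon step. To repair your route you would either have to carry out the leading-coefficient analysis in $k\{\tau\}$ in full, or import an argument, like the paper's determinant-versus-$\tau$-degree identity, that controls all places above $\infty$ at once.
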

\begin{remark}
The core of the proof is establishing that $|\cdot|_{*}$ is an absolute value, then items (1) and (2) are quite standard if one knows the proof of Hasse-Weil Theorem for curves over finite fields, and uses basic facts on symmetric polynomials. We also follow these lines for such proofs when we conclude in Section \ref{sec:proofRH}.
\end{remark}
\begin{remark}
For simplicity, we compute $(a_0)$ and not $a_0$. The coefficient of $a_0$ can be obtained using for example the Weil pairing \cite[Theorem 3.7.1]{Papikian2023}.
\end{remark}

Let $\det:\End(\phi)\rightarrow A$ be the determinant of an endomorphism as a linear map on the Tate module of $\phi$ at some prime $\fq$ (which is an element of $A$, as the characteristic polynomial is in $A[x]$).

The core of our method is based on the combination of the following arguments. Let $u\in \End(\phi)$.
The first one consists of a decomposition of the kernel of a separable endomorphism as an $\vF_q[T]$-module, showing that the degree of the determinant of $u$ on the Tate module is exactly the degree in $\tau$ of $u$ (Proposition \ref{thm:reduction_charpol} and Corollary \ref{prop:l-primary}). This allows to prove directly,  that $-\deg\det$ induces a valuation on $\vF_q(\phi_T,u)$ (Theorem \ref{thm:degdetval}), effectively extending to inseparable endomorphisms using integrality. 
Then, by the independence of valuations, this forces that there is only one place above $\infty\subseteq \vF_q[T]$ (Lemmata \ref{lemma:techsplit} and \ref{lemma:uniqueplaceourcase}).
Section \ref{sec:proofRH} puts everything together \`a la Hasse-Weil and proves the special form of the characteristic polynomial of the Frobenius, including the computation of the absolute values of its roots.

Notice also that our proof does not even need that $\End(\phi)$ has $\vF_q[T]$ rank less than or equal to $r^2$, but only that it has finite rank, which we prove directly for completeness.

\section{Background}

\subsection{Modules over PIDs}

We recall here some basic facts from commutative algebra.

\begin{theorem}[Fundamental Theorem for Modules over a PID]\label{thm:ftmp}
Let $R$ be a PID and $M$ be a finitely generated $R$-module.
Then, there exist unique positive integers $t,\ell\in \vN$, and  unique (up to units) $a_1,\dots a_\ell\in R$ such that
\[M\cong R^{\oplus t}\oplus R/(a_1)\oplus\cdots\oplus R/(a_\ell)\] 
and $a_i\mid a_{i+1}$ for all $i\in \{1,\dots, \ell-1\}$.
\end{theorem}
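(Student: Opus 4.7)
The plan is to derive the decomposition from a Smith Normal Form argument on a free presentation of $M$, and to prove uniqueness by characterizing the invariants intrinsically via Fitting ideals.

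First, since $M$ is finitely generated, I would choose a surjection $\pi\colon R^n \twoheadrightarrow M$. Using the classical fact that any submodule of a free module of finite rank over a PID is itself free (proved by induction on the rank via the first-coordinate projection), the kernel of $\pi$ is free of some rank $m \leq n$. Fixing bases then produces a matrix $A \in \Mat_{n\times m}(R)$ presenting $M$. Next I would put $A$ in Smith Normal Form: there exist $U \in \GL_n(R)$ and $V \in \GL_m(R)$ such that $UAV = \operatorname{diag}(d_1,\dots,d_m)$ (padded with zero rows if $n > m$), with $d_1 \mid d_2 \mid \cdots \mid d_m$. The Smith reduction proceeds by induction on the size of the matrix: using the PID hypothesis and $\gcd$-based row and column operations, one produces a pivot in the $(1,1)$ position which divides every other entry, then iterates on the complementary block. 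Reading off $M$ from this diagonal presentation directly yields
\[M \cong R^{\oplus t} \oplus R/(a_1) \oplus \cdots \oplus R/(a_\ell), \qquad a_i \mid a_{i+1},\]
where $t$ equals the number of zero diagonal entries and the $a_i$ are the non-unit $d_j$.

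For uniqueness, I would use two intrinsic invariants of $M$. The torsion-free rank $t$ is recovered as $t = \dim_K(M \otimes_R K)$, where $K$ is the fraction field of $R$. For the torsion invariants, I would invoke Fitting ideals: the $k$-th Fitting ideal $\operatorname{Fitt}_k(M)$ is the ideal generated by the $(n-k)\times(n-k)$ minors of any presentation matrix and is independent of the chosen presentation. Computing these ideals directly from the diagonal Smith form recovers each $(a_i)$ up to units, and the divisibility chain then pins down $\ell$ as well.

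The main obstacle is the Smith Normal Form reduction itself, since one must ensure termination of the reduction process. The standard device is to choose the pivot so that its length (the number of prime factors counted with multiplicity) is minimal among all nonzero entries, and to show that one step of row/column clearing either strictly decreases this length or already produces a pivot dividing the rest of the matrix. Once Smith Normal Form is established, both the existence statement and the uniqueness analysis via Fitting ideals become essentially routine bookkeeping.
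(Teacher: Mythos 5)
Your proposal is correct, but note that the paper does not prove this statement at all: it is quoted as classical background (and only its consequences, such as the fitting ideal $\chi(M)$ and Theorem \ref{thm:chidet}, are actually used), so there is no in-paper argument to compare against. Your route --- free presentation $R^m\hookrightarrow R^n\twoheadrightarrow M$, Smith Normal Form with the minimal-length pivot to guarantee termination, then uniqueness via $t=\dim_K(M\otimes_R K)$ and Fitting ideals --- is a standard and complete strategy. Two small points deserve care if you write it out in full. First, since a general PID need not be Euclidean, the reduction cannot rely on literal elementary row/column operations alone; you need the generalized $2\times 2$ operations built from B\'ezout coefficients (any $U\in\GL_n(R)$, $V\in\GL_m(R)$ are allowed), and your minimal-length pivot argument is exactly what makes this terminate, so this is fine but should be stated explicitly. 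Second, with the presentation chosen as you did, the kernel embeds into $R^n$, so the map $R^m\to R^n$ is injective and all diagonal entries $d_1,\dots,d_m$ are nonzero; the free rank is then $t=n-m$ (the ``zero diagonal entries'' only appear if you pad to a square matrix), and the $a_i$ are the non-unit $d_j$ --- a bookkeeping point worth making precise, but not a gap. The Fitting-ideal uniqueness argument (with $\operatorname{Fitt}_j$ of the torsion part equal to $(a_1\cdots a_{\ell-j})$ under the divisibility chain) correctly recovers each $(a_i)$ up to units, and hence $\ell$.
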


\begin{definition}
In the notation of Theorem \ref{thm:ftmp}, and if $t=0$, we say that $\chi(M):=(\prod^{\ell}_{i=1} a_i)$ is the \emph{fitting ideal} of $M$
\end{definition}

The following result is classical for modules over PIDs, see for example \cite[Theorem 1.2.6]{Papikian2023}.
\begin{theorem}\label{thm:chidet}
Let $R$ be a PID and $M$ be a free $R$-module of rank $n$. Let $u$ be an endomorphism of $M$ such that $u(M)$ is also a free $R$-module of rank $n$. Then,
\[\chi(M/u(M))=(\det(u))\]
\end{theorem}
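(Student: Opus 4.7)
The plan is to reduce the identity to a diagonal computation by invoking the Smith Normal Form theorem over a PID. Fix an $R$-basis $(e_1,\dots,e_n)$ of $M$ and let $U \in \Mat_n(R)$ be the matrix representing $u$. Since $R$ is a PID, there exist $P,Q \in \GL_n(R)$ and elements $d_1 \mid d_2 \mid \cdots \mid d_n$ of $R$ with
\[
P\,U\,Q \;=\; \operatorname{diag}(d_1,\dots,d_n).
\]

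Next I check that each $d_i$ is nonzero. Tensoring with the fraction field $K$ of $R$, the hypothesis that $u(M)$ is free of rank $n$ means that $u \otimes_R K$ is surjective, hence (by equality of dimensions) injective, so $U$ has nonzero determinant over $K$. This forces $\prod_i d_i \neq 0$, hence every $d_i$ is nonzero. Interpreting $P$ (respectively $Q$) as a change of basis on the target (respectively source) copy of $M$, we obtain an isomorphism of $R$-modules
\[
M/u(M) \;\cong\; R^n/\operatorname{diag}(d_1,\dots,d_n)\,R^n \;\cong\; \bigoplus_{i=1}^{n} R/(d_i),
\]
which is torsion because all $d_i$ are nonzero. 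Hence the case $t=0$ of Theorem \ref{thm:ftmp} applies, and by definition $\chi(M/u(M)) = (d_1 d_2 \cdots d_n)$.

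On the determinant side, taking determinants of the identity $PUQ = \operatorname{diag}(d_1,\dots,d_n)$ gives $\det(P)\det(U)\det(Q) = d_1 \cdots d_n$, and since $\det(P),\det(Q) \in R^\times$ we deduce $(\det(u)) = (\det(U)) = (d_1 \cdots d_n) = \chi(M/u(M))$.

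The one nontrivial ingredient is the existence of the Smith Normal Form over a PID; this is the step which genuinely uses that $R$ is a PID (beyond what is already encoded in Theorem \ref{thm:ftmp}) and would be the main obstacle if proved from scratch. Since it is a classical result already in the orbit of prerequisites of the paper (and essentially equivalent to the structure theorem invoked as Theorem \ref{thm:ftmp}), it is safe to cite; the remainder of the argument is just bookkeeping to match the two principal ideals generated by the product of invariant factors.
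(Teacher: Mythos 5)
Your proof is correct. The paper does not actually prove this statement --- it is quoted as a classical fact with a citation to \cite[Theorem 1.2.6]{Papikian2023} --- and your Smith Normal Form argument is precisely the standard proof of that classical fact: the verification that $\det(u)\neq 0$ (hence all $d_i\neq 0$, so $M/u(M)$ is torsion) via the hypothesis that $u(M)$ is free of rank $n$ is handled correctly, and the identification $(\det u)=(d_1\cdots d_n)=\chi(M/u(M))$ follows as you say since $\det P,\det Q\in R^\times$ and unit invariant factors contribute only trivial summands.
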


\subsection{Function Fields}

We list here some useful definitions and results from the theory of valuations, absolute values, and of global function fields. Let $K$ be a global function field.
\begin{definition}
Two valuations $v,w$ on $K$ are said to be \emph{equivalent} if there exists a $c\in \vR_{> 0}$ such that $v(x)=cw(x)$ for all $x\in K$.
\end{definition}

\begin{definition}
Two non-archimedean absolute values  $|\cdot|_a,|\cdot|_b$ on $K$ are said to be \emph{equivalent} if there exists a $c\in \vR_{> 0}$ such that $|x|_a=|x|_b^c$ for all $x\in K$.
\end{definition}

If $Q$ is a place of a function field $F$, let  $v_Q$ be the normalized valuation attached to $Q$.

\begin{theorem}
Let $F/\vF_q$ be a function field. Then, every valuation on $F$ satisfies
\[v(x)=cv_Q(x)\]
for some place $Q$ of $F$ and some positive $c\in \vR^+$.
\end{theorem}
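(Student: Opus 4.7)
The plan is to restrict $v$ to a rational subfield $\vF_q(T) \subseteq F$, classify valuations there by inspecting the valuation ring, and then lift back to $F$ via integral closure. I assume throughout that $v$ is non-trivial, since the trivial valuation cannot be written in the required form with $c > 0$.

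First I would observe that $v$ is automatically trivial on $\vF_q$ (every element of $\vF_q^*$ is a root of unity, so its image in the torsion-free ordered value group must vanish), and that the restriction $v|_{\vF_q(T)}$ must be non-trivial as well: otherwise any $\alpha \in F^*$ with minimal polynomial $\alpha^n + a_{n-1}\alpha^{n-1} + \cdots + a_0 = 0$ over $\vF_q(T)$ (so $a_0 \neq 0$) would force $v(\alpha) = 0$ via a strict-ultrametric argument applied separately to the cases $v(\alpha) > 0$ and $v(\alpha) < 0$, contradicting non-triviality of $v$ on $F$.

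Next I classify $v|_{\vF_q(T)}$ by splitting on $v(T) \geq 0$ versus $v(1/T) \geq 0$, which are symmetric under $T \leftrightarrow 1/T$. In the first case $\vF_q[T] \subseteq \mathcal{O}_v := \{x \in F : v(x) \geq 0\}$, and $\mathfrak{m}_v \cap \vF_q[T]$ is a nonzero prime ideal of the PID $\vF_q[T]$, generated by some monic irreducible $p$. Factoring an arbitrary $f \in \vF_q[T]^*$ as $p^k g$ with $\gcd(g,p)=1$ forces $v(g) = 0$ (both $g$ and $g^{-1}$ lie outside $\mathfrak{m}_v$), and hence $v(f) = v(p)\,v_p(f)$, yielding $v|_{\vF_q(T)} = c\,v_P$ with $c = v(p) > 0$ and $P$ the place attached to $p$. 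The case $v(1/T) \geq 0$ analogously gives $c\,v_\infty$.

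Finally I lift to $F$. Let $B$ be the integral closure of the DVR $\mathcal{O}_{v_P}$ in $F$; by Krull--Akizuki, $B$ is a Dedekind domain whose maximal ideals correspond to the places of $F$ above $P$. Since $\mathcal{O}_v$ is an integrally closed valuation ring of $F$ containing $\mathcal{O}_{v_P}$, it contains $B$, and $\mathfrak{Q} := \mathfrak{m}_v \cap B$ is a maximal ideal of $B$ with $\mathcal{O}_v = B_{\mathfrak{Q}} = \mathcal{O}_{v_Q}$ for the corresponding place $Q$ of $F$. Since $v$ and $v_Q$ share the same valuation ring and $v_Q$ has value group $\vZ$, the value group of $v$ is order-isomorphic to $\vZ$, hence equals $c'\vZ \subset \vR$, giving $v = c'\,v_Q$. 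The main obstacle I anticipate is this last step: establishing rigorously that $\mathcal{O}_v$ localizes $B$ at a unique maximal ideal and that sharing a valuation ring with the discrete rank-one $v_Q$ pins down $v$ up to a positive real scalar, which relies on the structure theory of Dedekind domains and of discrete rank-one valuation rings.
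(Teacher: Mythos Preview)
The paper does not prove this statement: it appears in Section~2.2 as a background fact from the theory of function fields, stated without proof. So there is no paper argument to compare against.

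Your outline is correct and is essentially the classical textbook proof (restrict to a rational subfield, classify valuations on $\vF_q(T)$ via Ostrowski-type analysis of the valuation ring, then lift through the integral closure). Regarding the obstacle you flag in the last step: once you have $B_{\mathfrak{Q}} \subseteq \mathcal{O}_v$ with $B_{\mathfrak{Q}}$ a DVR and both sharing fraction field $F$, equality is forced because the overrings of a valuation ring are precisely its localizations at prime ideals, and a DVR has only $(0)$ and its maximal ideal as primes; since $v$ is non-trivial, $\mathcal{O}_v \neq F$, so $\mathcal{O}_v = B_{\mathfrak{Q}}$. The scalar $c'$ is then simply the smallest positive value taken by $v$, which exists because a subgroup of $\vR$ order-isomorphic to $\vZ$ is discrete and cyclic. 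So the step you were worried about is in fact routine once phrased this way.
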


\begin{theorem}\label{thm:integrality}
Let $F=\vF_q(T)$ and $\overline F$ be the algebraic closure of $F$. Let $u$ be an element of $\overline F$ that is integral over $\vF_q[T]$. Then, the minimal polynomial of $u$ has coefficients in $\vF_q[T]$.
\end{theorem}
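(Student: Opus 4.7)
The plan is to combine two standard facts: the integral closure of $\vF_q[T]$ in $\overline{F}$ is a ring, and $\vF_q[T]$ is integrally closed in its field of fractions $\vF_q(T)$ (being a UFD). The minimal polynomial's coefficients are symmetric functions of the Galois conjugates, which are integral; then integrality combined with being in $\vF_q(T)$ forces them into $\vF_q[T]$.

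First I would fix a monic witness $g(x)\in \vF_q[T][x]$ with $g(u)=0$, and let $m(x)\in \vF_q(T)[x]$ be the minimal polynomial of $u$. Since $m(x)$ divides $g(x)$ in $\vF_q(T)[x]$, every root $\alpha_i\in \overline{F}$ of $m(x)$ also satisfies $g(\alpha_i)=0$, so each $\alpha_i$ is integral over $\vF_q[T]$. Writing $m(x)=\prod_{i=1}^{k}(x-\alpha_i)$, the coefficients of $m(x)$ are the elementary symmetric polynomials in the $\alpha_i$. Using the standard fact that the set of elements of $\overline{F}$ integral over $\vF_q[T]$ is a subring (provable via the finitely generated module characterization of integrality), the coefficients of $m(x)$ are integral over $\vF_q[T]$.

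It then suffices to show that any element $y\in \vF_q(T)$ integral over $\vF_q[T]$ lies in $\vF_q[T]$. Write $y=f/g$ with $f,g\in \vF_q[T]$ coprime, and let $y^n+c_{n-1}y^{n-1}+\cdots+c_0=0$ with $c_i\in \vF_q[T]$. Clearing denominators yields
\[
f^n = -g\bigl(c_{n-1}f^{n-1}+c_{n-2}f^{n-2}g+\cdots+c_0 g^{n-1}\bigr),
\]
so $g\mid f^n$ in $\vF_q[T]$. Since $\vF_q[T]$ is a UFD and $\gcd(f,g)=1$, $g$ must be a unit, hence $y\in \vF_q[T]$. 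Applying this to each coefficient of $m(x)$ completes the proof.

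The only step that requires any care is the ring structure of the integral closure, but this is purely formal and independent of the specifics of $\vF_q[T]$; the rest is a short Gauss-lemma-style calculation. I do not expect a genuine obstacle here — the statement is essentially the assertion that a UFD is integrally closed, reformulated for the minimal polynomial.
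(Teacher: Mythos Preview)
Your argument is correct and is the standard self-contained proof: conjugates of an integral element are integral, the integral closure is a ring, and a UFD is integrally closed in its fraction field. Each step is accurate as written.

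The paper, by contrast, does not give an argument at all: it simply records the statement as a corollary of \cite[Theorem~3.3.1]{Stichtenoth}, applied with $F'=\vF_q(u,T)$ and $z=u$. That reference is a general structural result about integral closures of holomorphy rings in finite extensions of function fields, from which the claim drops out immediately. So your route is more elementary and entirely self-contained (it works over any UFD, not just $\vF_q[T]$), while the paper's route is a one-line appeal to the ambient function-field machinery already being used elsewhere. Both are perfectly adequate for this background lemma; yours has the advantage of not requiring the reader to consult another text.
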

\begin{proof} This is a corollary of \cite[Theorem 3.3.1]{Stichtenoth} with $F'=\vF_q(u,T)$ and $z=u$.
\end{proof}

\begin{proposition}[Very Weak Independence of Valuations]
Let $M/\vF_q$ be a function field and let $P_1,\dots,P_\ell$ be places of $M$, and $y_1,\dots,y_\ell\in \vZ$. Then there exists an element $y\in M$ such that $v_{P_i}(y)=y_i$.
\end{proposition}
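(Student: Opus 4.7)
The plan is to deduce the statement from the standard Weak Approximation Theorem for function fields, e.g.\ \cite[Theorem 1.3.1]{Stichtenoth}, which asserts that for pairwise distinct places $P_1,\dots,P_\ell$ of $M$, arbitrary $x_1,\dots,x_\ell\in M$, and arbitrary integers $r_1,\dots,r_\ell\in\vZ$, there exists $y\in M$ with $v_{P_i}(y-x_i)=r_i$ for each $i$. Applying this with $x_i=0$ and $r_i=y_i$ delivers the required element immediately.

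Alternatively, one can give a more direct argument. First fix a uniformizer $t_i$ at each $P_i$, so that $t_i^{y_i}$ has $P_i$-valuation exactly $y_i$ but a priori uncontrolled valuations at the other $P_j$. The main step, which is the real obstacle, is the classical approximation lemma: for every $i$ and every integer $N$ there exists $w_i\in M$ with $v_{P_i}(w_i-1)\geq N$ and $v_{P_j}(w_i)\geq N$ for $j\neq i$; morally, $w_i$ is a smooth indicator of $P_i$. Taking $N$ sufficiently large in terms of the $y_k$ and of the $v_{P_j}(t_k)$ and setting
\[ y \;=\; \sum_{i=1}^{\ell} t_i^{y_i}\, w_i, \]
the ultrametric inequality forces $v_{P_i}(y)=y_i$: the $i$-th term has $P_i$-valuation exactly $y_i$, while every other term has $P_i$-valuation strictly greater than $y_i$ by the choice of $N$.

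The approximation lemma itself rests on the more primitive observation that for any two distinct places $P\neq Q$ of $M$ there is an element with $v_P>0$ and $v_Q<0$; this is immediate since the valuation rings $\cO_P$ and $\cO_Q$ are incomparable. Passing from this to the desired $w_i$ is a standard manipulation using products of terms of the form $z^n/(1+z^n)$ to push valuations to prescribed signs at each auxiliary place, and is carried out in \cite[Lemma 1.3.2]{Stichtenoth}.
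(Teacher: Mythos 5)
Your first paragraph is exactly the paper's own proof: apply the Weak Approximation Theorem \cite[Theorem 1.3.1]{Stichtenoth} with $x_i=0$ and $r_i=y_i$, which settles the claim. The additional direct sketch is fine but unnecessary, since it merely re-derives the approximation theorem you already invoked.
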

\begin{proof}
This is an immediate corollary of \cite[Theorem 1.3.1]{Stichtenoth} with $\ell=n$ (matching Stichtenoth's notation), the $x_i$'s equal to zero and $r_i=y_i$,  for $i\in \{1,\dots, \ell\}$.\end{proof}

\begin{lemma}\label{lemma:techsplit}
Let $M$ be an extension of $\vF_q(T)$. If the map $v:y\mapsto v_{\infty}(N(y))$ is a valuation on $M$ then  there is only one place of $M$ lying above $\infty\subseteq \vF_q(T)$.
\end{lemma}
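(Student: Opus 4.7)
The plan is to combine three ingredients: the classical identity expressing $v_\infty\circ N$ as an $\vF_q$-linear combination of the $v_Q$ for $Q\mid\infty$, the classification of valuations on a function field, and the Very Weak Independence of Valuations stated just above the lemma.

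First I would recall (or quickly verify) the identity
\[v_\infty\bigl(N_{M:\vF_q(T)}(y)\bigr) \;=\; \sum_{Q\mid\infty} f(Q\mid\infty)\,v_Q(y)\]
for all $y\in M^{\times}$, where the sum runs over the finitely many places $Q$ of $M$ lying above $\infty$ and $f(Q\mid\infty)$ is the residue degree. This follows from comparing the principal divisor of $N(y)$ in $\vF_q(T)$ with the ``norm divisor'' of the principal divisor of $y$ in $M$ (Stichtenoth, Corollary 3.1.13). With this identity in hand, the hypothesis becomes: the function $y\mapsto \sum_{Q\mid\infty} f(Q\mid\infty)\,v_Q(y)$ is a valuation on $M$.

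Applying the classification of valuations on a function field, this valuation must equal $c\,v_{Q_0}$ for a unique place $Q_0$ of $M$ and some $c>0$. Since $N(y)=y^{[M:\vF_q(T)]}$ for $y\in\vF_q(T)$, the restriction of $v$ to the base field equals $[M:\vF_q(T)]\,v_\infty$, which forces $Q_0$ to sit above $\infty$. It remains to rule out a second place $Q_1\neq Q_0$ above $\infty$. If one existed, the Very Weak Independence of Valuations would produce $y\in M$ with $v_{Q_1}(y)=1$, $v_{Q_0}(y)=0$, and $v_Q(y)=0$ for the remaining (finitely many) places $Q\mid\infty$. The norm formula then yields $v(y)=f(Q_1\mid\infty)\geq 1$, whereas $v(y)=c\,v_{Q_0}(y)=0$, a contradiction.

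The only real obstacle is citing the norm--valuation identity cleanly in the first step, since it is not explicitly stated in the paper; once that is in place, the classification plus weak independence finish things mechanically.
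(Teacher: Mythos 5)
Your proof is correct in substance, and it follows the same skeleton as the paper's: use the classification of valuations to write $v=c\,v_{Q_0}$ for a single place $Q_0$, observe that $Q_0$ lies over $\infty$ by restricting to $\vF_q(T)$, and then use the Very Weak Independence of Valuations to produce an element whose values at a hypothetical second place $Q_1\mid\infty$ force a contradiction. Where you genuinely diverge is in how the contradiction is extracted: you evaluate $v(y)$ through the classical decomposition $v_\infty(N_{M:\vF_q(T)}(y))=\sum_{Q\mid\infty}f(Q\mid\infty)\,v_Q(y)$, getting $v(y)=f(Q_1\mid\infty)\geq 1$ against $c\,v_{Q_0}(y)=0$, whereas the paper only needs the weaker fact that $v_\infty(N(y))>0$ for its chosen $y$ and derives it by hand: the chosen $y$ lies in every valuation ring above $\infty$, hence is integral over $\mathcal O_\infty$ (Theorem \ref{thm:integrality}), and the relation $y\,(y^{n-1}-\sum_i a_iy^{i-1})=a_0$ from its minimal polynomial yields $v_Q(N_{\vF_q(y,T):\vF_q(T)}(y))\geq 1$. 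Your route is shorter, but it buys that brevity by importing a heavier standard fact that the paper deliberately keeps out of its toolkit; the paper's integrality trick is longer but self-contained, which is the point of the article. Two cautions if you keep your version: the reference you give is off (the norm map on divisors and the identity you quote are not stated in that part of Stichtenoth), so you would need to prove the identity, e.g.\ via $N_{M:\vF_q(T)}(y)=\prod_{Q\mid\infty}N_{M_Q:\,\vF_q(T)_\infty}(y)$ after completing at $\infty$; and that identity is usually stated for separable extensions, while $M=\vF_q(u,T)$ need not a priori be separable over $\vF_q(T)$ (the minimal polynomial of the Frobenius can be inseparable), so you must invoke the version valid for arbitrary finite extensions — true here because the residue fields are finite, so there is no defect and $M\otimes_{\vF_q(T)}\vF_q(T)_\infty$ is a product of the completions — or treat the purely inseparable part separately. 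Neither caution breaks your argument, but they are exactly the frictions the paper's elementary argument avoids.
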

\begin{proof}
Set $N=N_{M:\vF_q(T)}$.
Suppose that there are at least two places above $\infty$, say that $\tilde \infty$ is the place attached to $v$, and $Q$ that is the place attached to another valuation $v_Q$. Say that $v_{\tilde \infty}$ is the normalized version of $v$. We have that $v_\infty$ can be extended to $v_Q$ using the ramification index of $Q$ over $\infty$. This gives that there exists a positive coefficient $c$ such that for every $y\in M$ 
\begin{equation}\label{eq:contrwithval}
cv_{\tilde\infty}(y)=v(y)=v_{\infty}(N(y))=v_Q(N(y))/e
\end{equation}
where $e$ is the ramification index of $Q\mid \infty$.
Now, by independence of valuations fix $y\in M$ such that $v_{R}(y)=1$ for all $R\neq \tilde \infty$ above $\infty$ (in particular, $v_{Q}(y)=1$), and $v_{\tilde\infty}(y)=0$.

Now, $y\in \bigcap_{ R'\supseteq \infty} \mathcal O_{R'}$, which is the integral closure of $\mathcal O_\infty$ in $M$. 
Consider now the minimal polynomial for $y$, which lives in $\mathcal O_\infty[x]$:
\[x^n-\sum^{n-1}_{i=0}a_ix^i\]
and write 

\[y\left(y^{n-1}-\sum^{n-1}_{i=1}a_iy^{i-1} \right)=a_0.\]

By applying $v_Q$ on both sides, we get that
\[1\leq v_Q(y)+v_Q \left(y^{n-1}-\sum^{n-1}_{i=1}a_iy^{i-1} \right)=v_Q(a_0)=v_Q(N_{\vF_q(y,T):\vF_q(T)}(y)).\]

Observe now that \[N_{\vF_q(y,T):\vF_q(T)}(y)^{[M:\vF_q(T,y)]}=N_{M:\vF_q(T)}(y),\] which also implies that $v_Q(N_{M:\vF_q(T)}(y))/e\geq 1$

Using now \eqref{eq:contrwithval} we get a contradiction by observing that the RHS should be strictly positive, while the LHS is zero.
\end{proof}

\subsection{Drinfeld Modules}
Let $\vF_q$ be a finite field and let $k=\vF_{q^n}$. A \emph{Drinfeld Module} is an injective morphism of $\vF_q$-algebras $\phi: \vF_q[T]\rightarrow k\{\tau\}$. We denote the image of $a\in\vF_q[T]=A$ via $\phi$ by $\phi_a$.
The \emph{rank} of the Drinfeld module is $r=\deg_\tau (\phi_T)$.
Let $\fp\neq 0$ be the characteristic of $\phi$. In the rest of these notes we identify $A$ with  $\vF_q[\phi_T]$.
We denote by $\phi_a(x)$ the $q$-polynomial in $\vF_{q^n}[x]$ attached to $\phi_a$, this is called the $a$-division polynomial of $\phi$. For example, if $\phi_T=\tau^2+\tau+1$ then 
$\phi_T(x)=x^{q^2}+x^q+x$.
Attached to a Drinfeld module $\phi:\vF_q[T]\longrightarrow k\{\tau\}$, defined by $\phi_T=\sum^r_{i=0} t_i\tau^i$ there is a morphism called the \emph{structure morphism} of $\phi$ and defined by
\[\gamma: \vF_q[T]\longrightarrow \vF_{q^d}\]
\[a\mapsto \partial_x \phi_a(x)\]
where $\partial_x$ is the usual $x$-derivative of polynomials, and $d=[\vF_q(t_0):\vF_q]$.

The characteristic of $\phi$ is the minimal polynomial $\mathfrak{p}$ over $\vF_q$ of the degree zero term of $\phi_T$, i.e. the minimal polynomial of $\partial_x \phi_T(x)$. 
Given a Drinfeld module $\phi$ of rank $r$, there is an action $*$ of $k[T]$ on the ring $k\{\tau\}$  defined by the following rules:
 if $u\in k\{\tau\}$ and $y\in k$, then $y*u=yu$ and $T*u=u\phi_T$.

Notice that $k$ does not commute with $\phi_T$, but $T$ commutes with $k$ in $k[T]$. This happens simply because $T$ is acting from the right, while $k$ is acting from the left. But this still gives a well defined action because $\vF_q[\phi_T]$ is commutative.
Interestingly enough, with this construction commutativity simply follows by associativity of product.

The following results are classical, and can be found for example in \cite[Chapter 3]{Papikian2023}.
\begin{theorem}[Anderson Motive Decomposition of $k\{\tau\}$ via $\phi$]
The ring $k\{\tau\}$ is a free $k[T]$ module of rank $r$ with free basis $1,\tau,\tau^2,\dots,\tau^{r-1}$.
\end{theorem}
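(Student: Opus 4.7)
The plan is to prove both spanning and linear independence by a degree-in-$\tau$ argument that mimics Euclidean division on the right by $\phi_T$.

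First I would set up the relevant degree count. Write $\phi_T = \sum_{i=0}^r g_i \tau^i$ with $g_r \neq 0$. Observe that for any $u \in k\{\tau\}$ of $\tau$-degree $m$ with leading coefficient $c$, the element $u \phi_T = T \ast u$ has $\tau$-degree $m+r$ with leading coefficient $c g_r^{q^m}$, and more generally $T^k \ast u = u \phi_T^k$ has $\tau$-degree $m + kr$ with nonzero leading coefficient in $k$. Consequently, for any nonzero $P(T) \in k[T]$ of $T$-degree $d$ and any $i \in \{0, \dots, r-1\}$, the element $P(T) \ast \tau^i$ has $\tau$-degree exactly $i + rd$ with nonzero leading coefficient.

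For spanning, I would argue by strong induction on $\tau$-degree. The elements $1, \tau, \dots, \tau^{r-1}$ are trivially in the span. For $u \in k\{\tau\}$ of $\tau$-degree $m \geq r$ with leading term $c \tau^m$, form the correction $v = (c g_r^{-q^{m-r}} T) \ast \tau^{m-r}$, which by the observation above has $\tau$-degree exactly $m$ with the same leading term $c \tau^m$. Then $u - v$ has strictly smaller $\tau$-degree, and the induction hypothesis concludes. The point is that multiplying on the right by $\phi_T$ raises $\tau$-degree by exactly $r$, so the standard Euclidean-division reduction works.

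For linear independence, suppose $\sum_{i=0}^{r-1} P_i(T) \ast \tau^i = 0$ with not all $P_i$ zero. For those $i$ with $P_i \neq 0$, the element $P_i(T) \ast \tau^i$ has $\tau$-degree $i + r \deg_T(P_i)$. Since $i$ ranges over distinct residues modulo $r$, these degrees are pairwise distinct. Hence the sum has a unique term of maximal $\tau$-degree, whose leading coefficient (a nonzero element of $k$) cannot be cancelled, contradicting the vanishing.

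The only potential obstacle is bookkeeping around the non-commutativity: because $k$ does not commute with $\tau$, one has to be consistent that the $k[T]$-action places scalars on the left and $\phi_T$ on the right, and the relation $\tau^m \alpha = \alpha^{q^m} \tau^m$ must be tracked when computing leading coefficients. Once this is handled, the rank computation is immediate: $\{1,\tau,\dots,\tau^{r-1}\}$ is a free $k[T]$-basis of $k\{\tau\}$.
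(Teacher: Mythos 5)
Your argument is correct: the observation that right multiplication by $\phi_T$ raises $\tau$-degree by exactly $r$ with invertible leading coefficient (since $g_r\neq 0$ and Frobenius is bijective on $k$) gives both spanning, via the Euclidean-type reduction, and independence, via the distinct-degrees-mod-$r$ count. The paper itself offers no proof of this statement -- it cites it as classical from the reference on Drinfeld modules -- and your degree/right-division-by-$\phi_T$ argument is precisely the standard proof given there, so nothing is missing (the only step worth making explicit is that in the spanning induction the element $\tau^{m-r}$ is itself in the span by the inductive hypothesis, and the span is a $k[T]$-submodule, so $v$ lies in it).
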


\begin{proposition}
Let $\phi$ be a Drinfeld module having characteristic $\fp$.
There is a unique integer $H(\phi)$ such that 
\[h(\phi_a)=H(\phi)v_{\fp}(a)\deg(\fp).\]
\end{proposition}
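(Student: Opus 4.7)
The plan is to recognize $a \mapsto h(\phi_a)$ as a valuation on $\vF_q(T)$ supported at $\fp$, and then to force its proportionality constant to be divisible by $\deg(\fp)$ by using the module structure of the $\fp$-torsion. Since $k\{\tau\}$ is a skew polynomial ring, the $\tau$-adic valuation $h$ is multiplicative: $h(uv)=h(u)+h(v)$ for all nonzero $u,v \in k\{\tau\}$, and the ultrametric inequality $h(u+v)\geq\min(h(u),h(v))$ is immediate. Consequently $v(a):=h(\phi_a)$ defines a valuation on $A\setminus\{0\}$ with values in $\vN$, extending uniquely to a valuation on $\vF_q(T)=\mathrm{Frac}(A)$.

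Next I would invoke the classification of valuations on a function field recalled in Section 2.2 to write $v=c\cdot v_Q$ for some place $Q$ of $\vF_q(T)$ and some $c>0$. Non-negativity of $v$ on $A$ rules out the infinite place, so $Q$ corresponds to a monic irreducible $\fq\in A$. For any $a\in A$ with $\fp\nmid a$ the structure morphism gives $\gamma(a)\neq 0$, so the constant term of $\phi_a$ is nonzero and $v(a)=0$; picking $a\in\fq\setminus\fp$ would then contradict $c\cdot v_\fq(a)>0$ unless $\fq=\fp$. Specializing to $a=\fp$ identifies $c=h(\phi_\fp)$, yielding $h(\phi_a)=h(\phi_\fp)\cdot v_\fp(a)$ for every $a\in A$.

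The main obstacle, as I see it, is the integrality statement $\deg(\fp)\mid h(\phi_\fp)$. I would establish this by counting the $\fp$-torsion $\phi[\fp]:=\{x\in\overline k:\phi_\fp(x)=0\}$ in two different ways. Since $\phi_\fp$ annihilates $\phi[\fp]$, the induced $A$-action factors through $A/(\fp)\cong \vF_{q^{\deg(\fp)}}$, so $\phi[\fp]$ inherits the structure of an $\vF_{q^{\deg(\fp)}}$-vector space, and therefore $|\phi[\fp]|$ is a power of $q^{\deg(\fp)}$. On the other hand, writing $\phi_\fp(x)=\sum_{i\geq h(\phi_\fp)}\alpha_i x^{q^i}$ with $\alpha_{h(\phi_\fp)}\neq 0$ and substituting $y=x^{q^{h(\phi_\fp)}}$ factors $\phi_\fp(x)=g(x^{q^{h(\phi_\fp)}})$ for a separable $q$-polynomial $g$ of $q$-degree $r\deg(\fp)-h(\phi_\fp)$. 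Since $x\mapsto x^{q^{h(\phi_\fp)}}$ is a bijection on $\overline k$, this gives $|\phi[\fp]|=q^{r\deg(\fp)-h(\phi_\fp)}$. Comparing the two expressions forces $\deg(\fp)\mid h(\phi_\fp)$.

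Setting $H(\phi):=h(\phi_\fp)/\deg(\fp)\in\vN$ then furnishes the required integer, and the identity $h(\phi_a)=H(\phi)\deg(\fp)v_\fp(a)$ follows from the previous step. Uniqueness of $H(\phi)$ is immediate upon evaluating at $a=\fp$.
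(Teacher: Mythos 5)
Your proof is correct. The paper itself only quotes this proposition as a classical fact (citing Papikian, Chapter 3) without giving a proof, and your argument is essentially the standard one: the $\tau$-adic valuation composed with $\phi$ is a valuation on $\vF_q(T)$ supported at $\fp$ (since the constant-term map $k\{\tau\}\to k$ is a ring homomorphism, the constant term of $\phi_a$ is $a(g_0)$, which vanishes exactly when $\fp\mid a$), and counting $\phi[\fp]$ both as an $A/(\fp)$-vector space and as the preimage under the bijection $x\mapsto x^{q^{h(\phi_\fp)}}$ of the root set of a separable additive polynomial forces $\deg(\fp)\mid h(\phi_\fp)$. One minor simplification: the appeal to the classification of valuations on $\vF_q(T)$ is avoidable --- once you know $h(\phi_a)=0$ whenever $\fp\nmid a$, writing $a=\fp^{v_\fp(a)}b$ with $\fp\nmid b$ and using multiplicativity of $h$ gives $h(\phi_a)=v_\fp(a)\,h(\phi_\fp)$ directly.
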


Let $\overline k$ be the algebraic closure of $k$. For a Drinfeld module $\phi$ and a polynomial $a\in A$, we denote by $\phi[a]$ the set of roots of $\phi_a(x)$ in $\overline k$. The set $\phi[a]$ is clearly an $\vF_q$-vector space and is called \emph{the $a$-torsion of $\phi$}. The splitting field of $\phi_a(x)$, denoted by $k(\phi[a])$ is called the \emph{$a$-division field of $\phi$}.

\begin{proposition}
Let $\phi$ be a Drinfeld module, $e\in \vN$, and $\fl$ be a prime of $A$. Let  $a,b$ be coprime elements of $A$. 
Then, as $A$-modules, \begin{itemize}
\item $\phi[ab]=\phi[a]\oplus \phi[b]$.
\item if $ \fl \neq \fp$, then $\phi[\fl^e]\cong (A/(\fl^e))^r$
\item $\phi[\fp^e]\cong (A/(\fp^e))^{r-H(\phi)}$
\end{itemize} 

\end{proposition}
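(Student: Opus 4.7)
The plan is to handle the three items in sequence, with the second and third sharing the same scaffolding and differing only in how inseparability is counted.

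For item (1), I would use Bézout in the PID $A=\vF_q[T]$: since $\gcd(a,b)=1$, there exist $u,v\in A$ with $ua+vb=1$. Any $x\in\phi[ab]$ then decomposes as $x=\phi_{ua}(x)+\phi_{vb}(x)$, and one checks immediately that $\phi_{ua}(x)\in\phi[b]$ (because $\phi_b\phi_{ua}=\phi_{uab}$ and $\phi_{ab}(x)=0$) and symmetrically $\phi_{vb}(x)\in\phi[a]$. The intersection $\phi[a]\cap\phi[b]$ is trivial, since any $y$ in it satisfies $y=\phi_1(y)=\phi_{ua+vb}(y)=0$. Both summands are visibly $A$-submodules of $\phi[ab]$ under the $\phi$-action, so this delivers the $A$-module direct sum.

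For item (2), the crucial input is separability of the division polynomial $\phi_{\fl^e}(x)$. The $x$-derivative of any $q$-polynomial is its constant coefficient, and that coefficient is precisely $\gamma(\fl^e)=\gamma(\fl)^e$, which is nonzero because $\ker\gamma=(\fp)$ and $\fl\neq\fp$. Hence $\phi_{\fl^e}(x)$ has $\deg_x\phi_{\fl^e}=q^{re\deg(\fl)}=|A/\fl^e|^r$ distinct roots in $\overline k$. Since $\phi[\fl^e]$ is a finitely generated $A$-module annihilated by $\fl^e$, Theorem \ref{thm:ftmp} gives $\phi[\fl^e]\cong\bigoplus_{i=1}^s A/\fl^{e_i}$ with each $e_i\leq e$. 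The same separability count applied to $\phi_\fl$ shows $|\phi[\fl]|=q^{r\deg(\fl)}$, while the $\fl$-torsion inside the decomposition has size $|A/\fl|^s$; this forces $s=r$. Matching total cardinalities then gives $\sum_{i=1}^r e_i=re$, and the bound $e_i\leq e$ leaves no option other than $e_i=e$ for every $i$.

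For item (3), the argument is parallel except that $\phi_{\fp^e}$ is inseparable, and the inseparable part must be peeled off using the height. I would write $\phi_{\fp^e}=\sum_{i\geq h}c_i\tau^i$ with $h=h(\phi_{\fp^e})=H(\phi)e\deg(\fp)$, and factor the associated polynomial as $\phi_{\fp^e}(x)=g(x^{q^h})$ where $g$ is a separable additive polynomial of $x$-degree $q^{re\deg(\fp)-h}$. Since $x\mapsto x^{q^h}$ is a bijection on $\overline k$, I obtain $|\phi[\fp^e]|=q^{(r-H(\phi))e\deg(\fp)}=|A/\fp^e|^{r-H(\phi)}$, and then the exact structure-theorem/cardinality argument from item (2) produces $\phi[\fp^e]\cong(A/\fp^e)^{r-H(\phi)}$. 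I expect the only real delicacy to be the invariant-factor bookkeeping in item (2)---forcing every elementary divisor to be maximal by combining the rank at the bottom layer with the total size---once that template is established, items (1) and (3) are essentially formal.
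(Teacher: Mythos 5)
Your proposal is correct, and it is the standard argument for this classical fact, which the paper itself does not prove but simply cites (it is Theorem 3.5.2 and the surrounding material in \cite[Chapter 3]{Papikian2023}): Bézout/CRT for the first item, and for the other two the count of distinct roots of the division polynomial (separability via $\gamma(\fl)\neq 0$ when $\fl\neq\fp$, peeling off the inseparable part of height $H(\phi)e\deg(\fp)$ when $\fl=\fp$) combined with the structure theorem, where comparing the $\fl$-torsion layer and the total cardinality pins down the number and size of the invariant factors. No gaps; your handling of the elementary divisors in item (2) and the bijectivity of $x\mapsto x^{q^h}$ on $\overline{k}$ in item (3) are exactly the points that need care, and you address both.
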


The \emph{Tate module} of $\phi$ is defined as 
\[T_{\fl}(\phi)=\lim_{\leftarrow_n} \phi[\fl^n]\]
with transition maps defined by powers of $\phi_{\fl}$.

\begin{theorem}
Let $\fl$ be a prime different from the characteristic of $\phi$. Then, $T_{\fl}(\phi)$ is isomorphic to $(A_\fl)^r$, where $A_\fl$ is the completion of $A$ at $\fl$.
If $\fp$ is the characteristic of $\phi$, then $T_{\fp}(\phi)\cong (A_\fp)^{r-H(\phi)}$
\end{theorem}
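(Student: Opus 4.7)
The plan is to combine the finite-level description $\phi[\fl^n]\cong(A/(\fl^n))^r$ from the preceding proposition with a compatible choice of bases, and then pass to the inverse limit. The ingredients are a surjectivity statement for the transition maps and a short lifting of bases.

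First I would check that $\phi_\fl\colon\phi[\fl^{n+1}]\to\phi[\fl^n]$ is surjective for every $n\geq 1$. The kernel of $\phi_\fl$ restricted to $\phi[\fl^{n+1}]$ is $\phi[\fl]$, so the image has order $|\phi[\fl^{n+1}]|/|\phi[\fl]|=q^{rn\deg(\fl)}=|\phi[\fl^n]|$, forcing equality with $\phi[\fl^n]$.

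Next I would inductively construct bases $e_1^{(n)},\dots,e_r^{(n)}$ of $\phi[\fl^n]$ over $A/(\fl^n)$ satisfying $\phi_\fl(e_i^{(n+1)})=e_i^{(n)}$, starting from any $A/(\fl)$-basis of $\phi[\fl]$ and using surjectivity to lift. To verify that the lifts form an $A/(\fl^{n+1})$-basis of $\phi[\fl^{n+1}]$, suppose $\sum_{i=1}^r c_i e_i^{(n+1)}=0$ with $c_i\in A/(\fl^{n+1})$. Applying $\phi_\fl$ gives $\sum_i c_i e_i^{(n)}=0$ in $\phi[\fl^n]$, so each $c_i$ lies in $\fl^n A/(\fl^{n+1})$. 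Writing $c_i=\fl^n c_i'$ with $c_i'\in A/(\fl)$, the original relation becomes $\sum_i c_i'\,\fl^{n-1}e_i^{(n)}=0$, and since $\fl^{n-1}e_i^{(n)}$ is an $A/(\fl)$-basis of $\fl^{n-1}\phi[\fl^n]\cong(A/(\fl))^r$, I conclude $c_i'=0$. A cardinality count then shows that $r$ linearly independent elements span $\phi[\fl^{n+1}]$.

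Under the resulting compatible bases the map $\phi_\fl$ is, in coordinates, exactly the natural projection $(A/(\fl^{n+1}))^r\to(A/(\fl^n))^r$, so
\[
T_\fl(\phi)=\lim_{\leftarrow_n}\phi[\fl^n]\cong\lim_{\leftarrow_n}(A/(\fl^n))^r=(A_\fl)^r.
\]
The case $\fl=\fp$ is identical with $r$ replaced by $r-H(\phi)$, using the analogous decomposition from the preceding proposition. The only real obstacle is the inductive basis step above; everything else is formal or a cardinality count.
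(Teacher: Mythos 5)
Your proof is correct. Note that the paper does not prove this statement at all: it appears in the background subsection as a classical fact, quoted from the literature on Drinfeld modules (\cite[Chapter 3]{Papikian2023}), so there is no internal argument to compare yours against; what you have written is the standard proof that the citation stands for. Your two key steps are sound: the surjectivity of $\phi_\fl\colon\phi[\fl^{n+1}]\to\phi[\fl^n]$ follows exactly as you say from the kernel being $\phi[\fl]$ together with the cardinalities supplied by the finite-level proposition (and this is where the hypothesis $\fl\neq\fp$, or the replacement of $r$ by $r-H(\phi)$ when $\fl=\fp$, enters), and the inductive basis-lifting is the one genuinely non-formal point, which you handle correctly: from $\sum_i c_i e_i^{(n+1)}=0$ you first get $c_i\in\fl^n A/(\fl^{n+1})$ by pushing down to level $n$, and then the rewritten relation $\sum_i c_i'\bigl(\fl^{n-1}e_i^{(n)}\bigr)=0$ kills the $c_i'$ because $\fl^{n-1}\phi[\fl^n]$ is free of rank $r$ over $A/(\fl)$ with basis $\fl^{n-1}e_i^{(n)}$; injectivity plus the cardinality count then gives that the lifts form a basis, and with compatible bases the transition maps become the canonical reductions, so the inverse limit is $(A_\fl)^r$ (respectively $(A_\fp)^{r-H(\phi)}$, the case $H(\phi)=r$ being the trivial one). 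If you want a basis-free variant, one can instead observe that the inverse limit of this surjective system is a compact torsion-free $A_\fl$-module whose reduction modulo $\fl$ is $\phi[\fl]\cong(A/(\fl))^r$ and invoke Nakayama's lemma for complete local rings, but your explicit lifting argument is equally rigorous and more elementary.
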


\begin{definition}
Let $\phi:\vF_q[T]\longrightarrow k\{\tau\}$ be a Drinfeld module. We say that an element $u\in k\{\tau\}$ is an \emph{endomorphism} for $\phi$ if $\phi_T u=u\phi_T$. 
\end{definition}
Endomorphisms are simply elements of $k\{\tau\}$ that commute with every element of the image of $\phi$.

\begin{proposition}
The ring $\End_k(\phi)$ is a finitely generated $A$-module.
\end{proposition}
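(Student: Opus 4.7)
My plan is to realize $\End_k(\phi)$ as an $A$-submodule of $k\{\tau\}$, where $k\{\tau\}$ is regarded as a finitely generated $A$-module via the $*$-action defined in the preceding subsection, and then invoke the Noetherianness of $A=\vF_q[T]$ to conclude.

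The Anderson Motive Decomposition recalled above states that $k\{\tau\}$ is a free $k[T]$-module of rank $r$ under the $*$-action, with basis $1,\tau,\dots,\tau^{r-1}$. Since $k=\vF_{q^n}$ is free of rank $n$ over $\vF_q$, the ring $k[T]$ is free of rank $n$ over $A$. Chaining these gives that $k\{\tau\}$ is a free $A$-module of rank $rn$ under the restricted $*$-action, which concretely sends $(a,u)\in A\times k\{\tau\}$ to $u\phi_a$.

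Next I check stability under this $A$-action. If $u\in\End_k(\phi)$, so $u\phi_T=\phi_T u$, and $a\in A$, then
\[
\phi_T(u\phi_a)=(\phi_T u)\phi_a=(u\phi_T)\phi_a=u\phi_a\phi_T=(u\phi_a)\phi_T,
\]
using the fact that $\phi_T$ commutes with $\phi_a$. Hence $u\phi_a\in\End_k(\phi)$, and $\End_k(\phi)$ is an $A$-submodule of the finitely generated $A$-module $k\{\tau\}$. Since $A$ is a PID, hence Noetherian, every $A$-submodule of a finitely generated $A$-module is finitely generated, which gives the proposition. No step is truly hard; the only pitfall is to keep in mind that one must use the $*$-action (right multiplication by $\phi_a$) and not the ordinary left $k$-vector space structure on $k\{\tau\}$, since elements of $k\setminus\vF_q$ do not commute with $\tau$ and so do not preserve $\End_k(\phi)$.
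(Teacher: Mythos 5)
Your proof is correct and follows essentially the same route as the paper: view $\End_k(\phi)$ as an $A$-submodule of $k\{\tau\}$, which is free of finite rank over $A$ via the Anderson motive decomposition, and conclude by Noetherianness of the PID $A$. The only difference is that you spell out the details the paper leaves implicit (the rank $rn$ over $A$ and the verification that $u\phi_a$ again commutes with $\phi_T$), which is fine.
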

\begin{proof}
The ring $k\{\tau\}$ is free of finite rank over $k[T]$, as it decomposes as an Anderson motive as  $k\{\tau\}=\bigoplus^{r-1}_{i=0} k[T]\tau^i$. Therefore, it is also free of finite rank as an $A$-module. Since $A$ is a PID, every submodule of a free module of finite rank is also free of finite rank. Since $\End_k(\phi)$ is an $A$-submodule of $k\{\tau\}$ we have the wanted claim.
\end{proof}

The following is a standard argument, we include the proof for completeness.

\begin{theorem}\label{integrality}
Let $u\in \End_k(\phi)$. Then $u$ is integral over $A$.
\end{theorem}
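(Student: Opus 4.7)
My plan is to apply the Cayley--Hamilton (determinant) trick to $\End_k(\phi)$ viewed as an $A$-module.

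First, I would invoke the preceding proposition, which tells us that $\End_k(\phi)$ is a finitely generated $A$-module. Since $\End_k(\phi)$ sits inside $k\{\tau\}$, which by the Anderson motive decomposition is a free $A$-module (hence torsion free), the module $\End_k(\phi)$ is itself torsion free. By the Fundamental Theorem for Modules over a PID (Theorem \ref{thm:ftmp}), applied with $R = A$, this forces $\End_k(\phi)$ to be free of finite rank $m$ over $A$; fix an $A$-basis $e_1,\dots,e_m$ containing (or expressed in terms of) the identity element $1 \in \End_k(\phi)$.

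Next, for $u \in \End_k(\phi)$, I would consider the left multiplication map $\lambda_u : \End_k(\phi) \to \End_k(\phi)$, $v \mapsto uv$. This is $A$-linear: the $A$-action on $\End_k(\phi)$ is through $\vF_q[\phi_T]$ from the left (or equivalently from the right, as $A$ is central in $\End_k(\phi)$ since endomorphisms commute with $\phi_T$ by definition), so left multiplication by $u$ commutes with multiplication by elements of $A$. Let $P(x) \in A[x]$ be the characteristic polynomial of $\lambda_u$ with respect to the basis $e_1,\dots,e_m$; it is monic of degree $m$ with coefficients in $A$.

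Finally, by the Cayley--Hamilton theorem, $P(\lambda_u) = 0$ as an $A$-linear endomorphism of $\End_k(\phi)$. Evaluating at $1 \in \End_k(\phi)$ gives $P(u) \cdot 1 = 0$, i.e.\ $P(u) = 0$ in $\End_k(\phi)$, proving that $u$ is integral over $A$. The only point that requires care is the $A$-linearity of $\lambda_u$, which is precisely the defining property of endomorphisms of $\phi$; the remainder is an off-the-shelf application of Cayley--Hamilton to a free module over a PID.
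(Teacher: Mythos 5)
Your proof is correct, but it follows a genuinely different (though equally standard) route from the paper's. The paper also starts from the preceding proposition that $\End_k(\phi)$ is a finitely generated $A$-module, but then argues via the ascending chain condition: $\End_k(\phi)$ is a Noetherian $A$-module, so the chain $M_i=\langle 1,u,\dots,u^i\rangle_A$ stabilizes, forcing $u^j\in\langle 1,u,\dots,u^{j-1}\rangle_A$ and hence integrality; no freeness, no characteristic polynomial, no Cayley--Hamilton is needed. You instead upgrade finite generation to freeness (using torsion-freeness, which holds because $\End_k(\phi)$ embeds in $k\{\tau\}$, a free hence torsion-free $A$-module, or because $k\{\tau\}$ has no zero divisors, plus Theorem \ref{thm:ftmp}), and then apply Cayley--Hamilton to the left-multiplication map $\lambda_u$, whose $A$-linearity is exactly the centrality of $\vF_q[\phi_T]$ in $\End_k(\phi)$. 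Evaluating $P(\lambda_u)=0$ at $1$ does the job for \emph{any} $A$-basis, so your aside about choosing a basis containing $1$ is unnecessary (and slightly misleading, since $1$ need not be part of a basis a priori). What each approach buys: the paper's chain argument is shorter and uses the bare minimum (ACC), while yours produces an explicit monic polynomial over $A$ annihilating $u$, namely the characteristic polynomial of $\lambda_u$, of degree equal to the $A$-rank of $\End_k(\phi)$ --- slightly more information at the cost of the extra torsion-free/freeness step.
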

\begin{proof}
It is enough to prove that $u$ is integral over $A$. To see this, observe that $\End_k(\phi)$ is finitely generated over $A$, and therefore it is Noetherian, since $A$ is. Now, consider the sequence of $A$ submodules 
$M_i=\langle 1,u,\dots u^i\rangle_{\vF_q[T]}$. This stabilizes at some index $j$, giving $M_j=M_{j-1}$, which forces $u^j\in \langle 1,u,\dots u^{j-1}\rangle_{A}$, proving integrality of $u$.
\end{proof}

\begin{remark}
Endomorphisms of a Drinfeld Module preserve torsion points, i.e. $u(\phi[a])\subseteq \phi[a]$. In fact, if $z\in \phi[a]$ and $u\in \End(\phi)$, then $u(z)$ is annihilated by $\phi_a$:
\[\phi_a(u(z))=u(\phi_a(z))=u(0)=0\]
In particular, an endomorphism of $\phi$ can be seen as an endomorphism of $T_{\fp}(\phi)$, and therefore as an $r\times r$ matrix in $A_{\fp}$.
\end{remark}

The following is a summary of standard facts for minimal and characteristic polynomials of endomorphisms of Drinfeld modules, see for example \cite[Theorem 3.6.6]{Papikian2023}.
Recall that $A=\vF_q[T]$.
\begin{theorem}\label{thm:charpol}
Let $\vF_q$ be a finite field, $n$ be a positive integer, and $k=\vF_{q^n}$. Let $\phi:A\longrightarrow k\{\tau\}$ be a Drinfeld module of rank $r$.
Let $\fl\in A$ be an irreducible polynomial different from the characteristic of $\phi$.
Set $F=\vF_q(T)$ and $m(T,x)$ the minimal polynomial for $u$ over $A$. Let $P_{\phi,\fl,u}(T,x)$ be the characteristic polynomial of $u\in \End(\phi)$ over $T_{\fl}(\phi)$. We have that
$m(T,x)$ is equal to the minimal polynomial of $u$ as an $A_{\fl}$-linear map on $T_\fl(\phi)$. Moreover,
\[P_{\phi,u}(T,x)=m(T,x)^{r/[F(u):F]}.\]
In particular, $P_{\phi,u}(T,x)$ and $m(T,x)$ do not depend on the choice of $\fl$.
\end{theorem}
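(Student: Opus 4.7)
The plan is to prove the three assertions in order: first that $m(T,x) \in A[x]$ and is irreducible, next that it equals the minimal polynomial of the $A_\fl$-linear map $M$ induced by $u$ on $T_\fl(\phi)$, and finally the characteristic polynomial formula $P_{\phi,u}(T,x)=m(T,x)^{r/[F(u):F]}$, where $F=\vF_q(T)$. Independence of $\fl$ for both polynomials will then be automatic, since $m$ is intrinsic to $u$.

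For the first claim I would combine Theorem \ref{integrality} (which gives that $u$ is integral over $A$) with Theorem \ref{thm:integrality} to conclude $m(T,x) \in A[x]$. Irreducibility in $A[x]$, and hence in $F[x]$ by Gauss's lemma, follows because $A[u]$ embeds in the domain $k\{\tau\}$ and is therefore itself a domain.

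For the second claim, let $q(T,x) \in F_\fl[x]$ be the minimal polynomial of $M$ on $T_\fl(\phi)$. Since $m(T,u)=0$ in $\End(\phi)$, evaluation gives $m(T,M)=0$, so $q \mid m$ in $F_\fl[x]$. For the reverse divisibility, the key observation is faithfulness: any nonzero $w \in k\{\tau\}$, viewed as the attached additive polynomial in $\overline k[x]$, has only finitely many zeros, whereas $\bigcup_n \phi[\fl^n]$ is infinite. Hence the natural map $\End(\phi) \otimes_A A_\fl \to \End_{A_\fl}(T_\fl(\phi))$ is injective. This lets one promote any relation $q(T,u) = 0$ on $T_\fl(\phi)$ (after clearing $\fl$-adic denominators) to a vanishing in $\End(\phi) \otimes_A F_\fl$; minimality of $m$ then forces $\deg q \geq \deg m$, so $q = m$.

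For the third claim, factor $m = \prod_i m_i$ in $F_\fl[x]$ (corresponding to the primes $\fl_i$ of $F(u)$ above $\fl$). The $F_\fl$-vector space $V_\fl := T_\fl(\phi) \otimes_{A_\fl} F_\fl$ carries an $F_\fl[x]/(m)$-action and therefore decomposes as $V_\fl = \bigoplus_i V^{(i)}$, where $V^{(i)}$ has some dimension $d_i$ over the field $F_\fl[x]/(m_i)$, giving $P_{\phi,u} = \prod_i m_i^{d_i}$. The main obstacle is to show that all $d_i$ coincide; granted this, irreducibility of $m$ over $F$ and Galois invariance force the desired formula $P_{\phi,u} = m^{r/[F(u):F]} \in A[x]$, and independence of $\fl$ drops out. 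To handle this balancedness I would realize $T_\fl(\phi)$ as a torsion-free finitely generated module over the order $A[u] \otimes_A A_\fl$ and, after passing to its normalization $\prod_i \mathcal O_{F(u),\fl_i}$, show that the resulting localizations all have the same rank, or equivalently that $V_\fl$ is free as a module over $F(u) \otimes_F F_\fl$. This is the deepest input, and in the standard treatments it uses the action of the full commutant of $\phi_T$ in $k\{\tau\}$ to identify the various $V^{(i)}$.
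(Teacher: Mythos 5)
First, a point of comparison: the paper does not prove Theorem \ref{thm:charpol} at all; it is quoted as background from \cite[Theorem 3.6.6]{Papikian2023}, and the stated purpose of the paper is to take exactly this statement as its only nontrivial Drinfeld-module input. So your attempt must be measured against the standard proof, whose route you do follow in outline --- but with a genuine gap at precisely the step that carries all the content. Namely, the claim that the multiplicities $d_i$ in $P_{\phi,u}=\prod_i m_i^{d_i}$ all coincide, equivalently that $V_\fl=T_\fl(\phi)\otimes_{A_\fl}F_\fl$ is free as a module over $F(u)\otimes_F F_\fl$ (here $F_\fl$ is the $\fl$-adic completion of $F$), is asserted but not proved: you call it ``the deepest input'' and defer it to ``standard treatments'' via ``the action of the full commutant of $\phi_T$''. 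That is a deferral, not an argument. Without it you only obtain $P_{\phi,u}=\prod_i m_i^{d_i}$ with uncontrolled multiplicities, and neither the formula $P_{\phi,u}=m^{r/[F(u):F]}$, nor $P_{\phi,u}\in A[x]$, nor independence of $\fl$ follows. Filling this step requires real input --- e.g.\ a semisimplicity/bicommutant analysis of $\End(\phi)\otimes_A F_\fl$ acting on $V_\fl$, or a comparison with the Anderson motive $k\{\tau\}$ --- which is exactly the material this paper deliberately avoids redoing by citing \cite{Papikian2023}.

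A second, smaller gap is your justification of the injectivity of $\End(\phi)\otimes_A A_\fl\to\End_{A_\fl}(T_\fl(\phi))$. The finitely-many-roots observation shows only that a nonzero $w\in k\{\tau\}$ cannot annihilate all of $\bigcup_n\phi[\fl^n]$, i.e.\ that $\End(\phi)\to\End_{A_\fl}(T_\fl(\phi))$ is injective; an element of the tensor product is not itself an additive polynomial, so the argument does not apply to it. The standard proof of the tensored statement needs that $\End(\phi)$ is a finitely generated $A$-module, the saturation property (if $w\in\End(\phi)$ kills $\phi[\fl^n]$ then $w=w'\phi_{\fl^n}$ with $w'\in\End(\phi)$, using separability of $\phi_{\fl^n}$ and right division in $k\{\tau\}$), and an $\fl$-adic approximation argument. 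You cannot sidestep this, because your step ``minimality of $m$ forces $\deg q\geq \deg m$'' is precisely the assertion that $1,u,\dots,u^{\deg m-1}$ remain $F_\fl$-linearly independent as operators on $V_\fl$, i.e.\ that $F(u)\otimes_F F_\fl$ acts faithfully. The first part of your proposal ($m(T,x)\in A[x]$, irreducible over $F$, via Theorems \ref{integrality} and \ref{thm:integrality}) is fine, though irreducibility is automatic for a minimal polynomial over the field $F$.
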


\section{The technical core}\label{sec:techcore}
This is the crucial part of the paper that allows to cut out most of the deep results that are needed in other literature such as \cite{anderson1986t,angles1997some,Papikian2023}.

\begin{proposition}\label{thm:switch}
Let $k$ be a finite field and $\phi:\vF_q[T]\longrightarrow k\{\tau\}$ be a Drinfeld module. Let $u\in \End(\phi)$, and  $m_u=m_u(T,x)$ be the minimal polynomial for $u$ over $\vF_q(T)$. Let $\psi:\vF_q[x]\longrightarrow k\{\tau\}$ be the Drinfeld module defined by $\psi_{x}=u$. Then there exists $c\in \vF_q$ such that $c m_u(x,T)\in \vF_q[x][T]$ is the minimal polynomial (in $T$) for $\phi_T$ over $\vF_q(x)$ (where now $x=\psi_x=u$) and has coefficients in $\vF_q[x]$.
\end{proposition}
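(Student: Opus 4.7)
The key idea is a symmetry: since $\phi_T$ commutes with $u = \psi_x$ in $k\{\tau\}$, $\phi_T$ itself lies in $\End(\psi)$, so Theorem \ref{integrality} can be applied to $\psi$ to control $\phi_T$ over $\vF_q[u]$, exactly mirroring its original application to $\phi$ to control $u$ over $\vF_q[T]$. The remainder is standard bivariate algebra via Gauss's lemma.

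First I would record structural facts about $m_u$. By Theorem \ref{integrality}, $u$ is integral over $\vF_q[T]$, so $m_u(T,x) \in \vF_q[T][x]$ is monic in $x$ (Theorem \ref{thm:integrality}); being monic in $x$ and irreducible in $\vF_q(T)[x]$, Gauss's lemma gives irreducibility of $m_u$ in $\vF_q[T,x]$. Writing $m_u(T, x) = \sum_i g_i(x) T^i$, the content $c_0 := \gcd_i g_i(x) \in \vF_q[x]$ must therefore be a unit in $\vF_q[x]^\ast = \vF_q^\ast$, otherwise $m_u = c_0 \cdot m_u'$ would be a nontrivial factorization in $\vF_q[T, x]$. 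A second use of Gauss (primitivity in $\vF_q[x][T]$ plus irreducibility in $\vF_q[T,x]$) yields irreducibility of $m_u$ in $\vF_q(x)[T]$, and hence $\deg_T m_u = [\vF_q(T, u) : \vF_q(u)] =: s'$. Separately, $\psi$ is injective because $u$ has $\deg_\tau u \geq 1$, so the powers $u^j$ have strictly increasing $\tau$-degrees and are $\vF_q$-linearly independent; thus $u$ is transcendental over $\vF_q$ and $\psi$ is a Drinfeld module.

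Next, I would apply Theorem \ref{integrality} to $\psi$: $\phi_T$ commutes with $u = \psi_x$, so $\phi_T \in \End(\psi)$ and is therefore integral over $\vF_q[u]$. Hence the minimal polynomial $\mu(T)$ of $\phi_T$ over $\vF_q(u)$ is monic, lies in $\vF_q[u][T]$, and has degree $s'$. The relation $m_u(\phi_T, u) = 0$ holds in the commutative subring $\vF_q[\phi_T, u] \subseteq k\{\tau\}$, so $\phi_T$ is a root of $m_u(T, u) \in \vF_q(u)[T]$; letting $f(x) := g_{s'}(x)$ be the leading coefficient of $m_u$ in $T$, transcendence of $u$ gives $f(u) \neq 0$, hence $\deg_T m_u(T, u) = s' = \deg \mu$, and comparing leading terms gives $m_u(T, u) = f(u) \mu(T)$.

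The main obstacle — and the final step — is showing $f(x) \in \vF_q^\ast$, so that $c := 1/f$ lies in $\vF_q$. From $\mu \in \vF_q[u][T]$ and $m_u(T, u) = f(u) \mu(T)$, $f(u)$ divides every coefficient $g_i(u)$ of $m_u(T, u)$ in $\vF_q[u]$; transporting via the isomorphism $\vF_q[u] \cong \vF_q[x]$ (valid since $u$ is transcendental), $f(x)$ divides every $g_i(x)$ in $\vF_q[x]$, hence divides the content $c_0 \in \vF_q^\ast$. Therefore $f(x) \in \vF_q^\ast$, and with $c := 1/f$, the polynomial $c \cdot m_u(x, T) \in \vF_q[x][T]$ is monic in $T$ with coefficients in $\vF_q[x]$, and reduces to $\mu(T)$ under $x \mapsto u$ — the required minimal polynomial.
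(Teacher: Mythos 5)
Your proof is correct, and its engine is the same as the paper's: the role reversal $\phi_T\in\End(\psi)$, so that Theorem \ref{integrality} applies on both sides ($u$ integral over $\vF_q[T]$ and $\phi_T$ integral over $\vF_q[u]$). The execution differs in packaging. The paper looks at the evaluation map $E\colon \vF_q[x,T]\to k\{\tau\}$, $f(x,T)\mapsto f(u,\phi_T)$, shows its kernel $J$ is the principal ideal $(m_u)$ (division by $m_u$, monic in $x$ over $\vF_q[T]$, stays integral), and then concludes by ``reversing the roles'': the symmetric argument identifies $J$ with the ideal generated by the integral minimal polynomial of $\phi_T$ over $\vF_q[x]$, and two generators of the same principal ideal in $\vF_q[x,T]$ differ by a unit, i.e.\ by $c\in\vF_q^{\ast}$. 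You instead never form the kernel ideal: you work directly with the two minimal polynomials, using Gauss's lemma twice plus a content computation to show the leading $T$-coefficient $f(x)$ of $m_u$ divides the content $c_0\in\vF_q^{\ast}$ and hence is a constant, then match $m_u(T,u)=f(u)\mu(T)$ against the monic integral minimal polynomial $\mu$ of $\phi_T$ over $\vF_q[u]$. Your version is longer but makes explicit exactly the primitivity/unit bookkeeping that the paper compresses into ``reversing the role of $\phi_T$ and $u$ gives the wanted claim,'' including why $c$ lands in $\vF_q$ rather than merely in $\vF_q(x)$; the paper's ideal-theoretic phrasing buys brevity at the cost of leaving those details to the reader. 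One tiny point worth a clause in your write-up: the inequality $\deg_T m_u\geq 1$ (equivalently, transcendence of $u$ over $\vF_q$, which you do establish) is what makes the factorization $m_u=c_0\,m_u'$ nontrivial in the content argument, so it should be cited there explicitly.
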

\begin{proof}
Consider the map
\[E:\vF_q[x,T]\longrightarrow k\{\tau\}\]
\[f(x,T)\mapsto f(u,\phi_T)\]
The kernel of this map is a certain ideal $J$ containing the minimal polynomial $m_u(T,x)$ of $u$ over $\vF_q[T]$. Since every polynomial in $J$ must be divisible by $m_u(x,T)$ in $\vF_q[T][x]$, this shows that $J=(m_u(x,T))$. Reversing the role of $\phi_T$ and $u$ gives the wanted claim.
\end{proof}

\begin{proposition}\label{thm:reduction_charpol}

Let $k$ be a finite field and let $u$ be an endomorphism of a Drinfeld module $\phi:\vF_q[T]\longrightarrow k\{\tau\}$. Let $\fl$ be a monic irreducible polynomial in $\vF_q[T]$ different from the characteristic of $\phi$.
Let $P_{u,\phi}\in\vF_q[T][x]$ be the characteristic polynomial of $u$ when acting on $ T_\fl(\phi)$. Then, $P_{u,\phi} \mod \fl$ is the characteristic polynomial of $u_{|\phi[\fl]}:\phi[\fl]\longrightarrow\phi[\fl]$.
\end{proposition}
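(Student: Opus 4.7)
The plan is to work with an explicit matrix realization of $u$ on $T_\fl(\phi)$ and then reduce modulo $\fl$. Since $\fl$ is different from the characteristic of $\phi$, the Tate module $T_\fl(\phi)$ is free of rank $r$ over $A_\fl$. Fix an $A_\fl$-basis $e_1,\dots,e_r$ of $T_\fl(\phi)$. With respect to this basis, the endomorphism $u$ (which acts $A_\fl$-linearly by the earlier remark) is represented by a matrix $U\in\Mat_r(A_\fl)$, and by definition
\[
P_{u,\phi}(T,x)=\det(xI_r-U).
\]
By Theorem \ref{thm:charpol}, this polynomial actually lies in $A[x]$, so reducing modulo $\fl$ is well defined and yields $\det(xI_r-\overline U)\in(A/\fl)[x]$, where $\overline U$ denotes the reduction of $U$ modulo $\fl$.

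The key identification is $T_\fl(\phi)/\fl T_\fl(\phi)\cong\phi[\fl]$ as $A$-modules. This comes from the inverse limit definition $T_\fl(\phi)=\varprojlim\phi[\fl^n]$ with transition maps $\phi_\fl$: the projection onto the first coordinate sends $T_\fl(\phi)$ onto $\phi[\fl]$ with kernel exactly $\fl\, T_\fl(\phi)$ (using that $\phi[\fl^{n+1}]/\phi[\fl]\cong\phi[\fl^n]$ via $\phi_\fl$). Under this identification, the images $\overline{e_1},\dots,\overline{e_r}$ form an $A/\fl$-basis of $\phi[\fl]$, matching the known isomorphism $\phi[\fl]\cong(A/\fl)^r$ from the proposition on $\fl$-torsion recalled above.

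Since $u\in\End(\phi)$ commutes with the transition maps $\phi_\fl$, the action of $u$ descends to the quotient, and the induced map on $\phi[\fl]$ is represented in the basis $\overline{e_1},\dots,\overline{e_r}$ precisely by the matrix $\overline U$. Consequently, the characteristic polynomial of $u_{|\phi[\fl]}$ is
\[
\det(xI_r-\overline U)=P_{u,\phi}(T,x)\bmod\fl,
\]
which is the desired statement.

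The only potentially delicate point is the identification $T_\fl(\phi)/\fl T_\fl(\phi)\cong\phi[\fl]$ together with the compatibility of the $u$-action. This is essentially bookkeeping: freeness of $T_\fl(\phi)$ over $A_\fl$ makes the quotient $r$-dimensional over $A/\fl$, and $u$-equivariance is automatic because $u$ commutes with $\phi_\fl$, so no further obstacle arises.
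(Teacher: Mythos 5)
Your proof is correct and follows essentially the same route as the paper: represent $u$ by a matrix over $A_\fl$ on the free module $T_\fl(\phi)$, reduce modulo $\fl$, and identify the reduction with the action of $u$ on $\phi[\fl]$. The only difference is that you spell out the identification $T_\fl(\phi)/\fl\,T_\fl(\phi)\cong\phi[\fl]$ and its $u$-equivariance, which the paper's one-line proof leaves implicit.
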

\begin{proof}
Simply observe that $P=\det(x-u_{|A_{\fl}^r})$, then 
\[P\mod \fl=\det(x-u_{|A_{\fl}^r} \mod \fl)=\det(x-u_{|(A/\fl)^r} )=\det(x-u_{|\phi[\fl]} ),\]
which concludes the proof.
\end{proof}

Let $R$ be a PID.
For an $R$ module $M$, let us denote by  $\chi_R(M)$ the ideal generated by the product of the fundamental invariant factors of $M$. With abuse of notation, for an ideal $(\fl^e)$ in $A_{\fl}$, we denote by $\deg((\fl^e))$ the quantity $e\deg(\fl)$.
Notice that if $M$ is finite and $R=A_{\fl}$, then $\dim_{\vF_q}(M)=\deg(\chi_R(M))$.

\begin{proposition}\label{prop:l-primary}
Let $k$ be a finite extension of $\vF_q$.
Let $\phi:\vF_q[T]\longrightarrow k\{\tau\}$ be a Drinfeld module. Let $\fl$ be a prime in $\vF_q[T]$. Let $u\in \End(\phi)$.
Let $e$ be the minimal integer such that $U_\fl=\phi[\fl^e]\cap \ker(u_{|\overline k}))$ is maximal. Then, $v_\fl(\det(u))=\dim_{\vF_q}(U_\fl)/\deg(\fl)$.
\end{proposition}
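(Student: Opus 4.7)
The plan is to identify $U_\fl$ with the cokernel $T_\fl(\phi)/u(T_\fl(\phi))$ via a snake-lemma argument, and then invoke Theorem~\ref{thm:chidet} to read off its fitting ideal in terms of $\det(u)$. We may assume $u\neq 0$; otherwise $U_\fl$ never stabilizes and the statement is vacuous. Since $k\{\tau\}$ is a skew polynomial ring over a field it has no zero divisors, and combined with the minimality of $m_u(T,x)$ from Theorem~\ref{thm:charpol} this forces $x\nmid m_u(T,x)$. Because $P_{\phi,u}(T,x)$ is a power of $m_u(T,x)$, its constant term $\pm \det(u)$ is nonzero, so $u$ acts invertibly on $F_\fl^r$ (writing $F_\fl$ for the fraction field of $A_\fl$) and injectively on $T_\fl(\phi)\cong A_\fl^r$.

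Next, $u$ corresponds to a nonzero $q$-polynomial in $\overline k[x]$ with finitely many roots, so the $\fl$-primary part of $\ker(u|_{\overline k})$ lies in $\phi[\fl^e]$ for $e$ sufficiently large, and $U_\fl$ stabilizes to $\ker(u|_{\phi[\fl^\infty]})$. Applying the snake lemma to the tautological short exact sequence of $A_\fl$-modules
\[0\longrightarrow T_\fl(\phi)\longrightarrow F_\fl^r\longrightarrow \phi[\fl^\infty]\longrightarrow 0,\]
with $u$ acting vertically and invertibly on the middle term, yields the isomorphism
\[U_\fl=\ker(u|_{\phi[\fl^\infty]})\cong T_\fl(\phi)/u(T_\fl(\phi)).\]

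Finally, since $u$ is injective on $T_\fl(\phi)\cong A_\fl^r$, the submodule $u(T_\fl(\phi))$ is free of rank $r$, so Theorem~\ref{thm:chidet} gives $\chi_{A_\fl}(T_\fl(\phi)/u(T_\fl(\phi)))=(\det(u))$. Using the remark preceding the proposition that $\dim_{\vF_q}M=\deg\chi_R(M)$ for finite $A_\fl$-modules, we conclude
\[\dim_{\vF_q}(U_\fl)=\deg((\det(u)))=v_\fl(\det(u))\deg(\fl),\]
which rearranges to the claim. The main obstacle (beyond routine bookkeeping with the structure theorem for modules over the PID $A_\fl$) is the snake-lemma identification of $U_\fl$ with the cokernel; alternatively, one could diagonalize $u$ on $T_\fl(\phi)$ via Smith normal form and compute both $\ker(u|_{\phi[\fl^\infty]})$ and $T_\fl(\phi)/u(T_\fl(\phi))$ coordinatewise to obtain the same answer.
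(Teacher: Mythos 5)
Your proof is correct and ends exactly where the paper does---with Theorem \ref{thm:chidet} and the remark that $\dim_{\vF_q}M=\deg\chi_{A_\fl}(M)$ for finite $M$---but you reach the key identification $U_\fl\cong T_\fl(\phi)/u(T_\fl(\phi))$ by a different mechanism. The paper stays at finite level: for $d\ge e$ it applies rank--nullity to $u$ acting on the finite $\vF_q$-space $\phi[\fl^d]$, so that $\dim_{\vF_q}(\phi[\fl^d]/u(\phi[\fl^d]))=\dim_{\vF_q}(\phi[\fl^d]\cap\ker u)=\dim_{\vF_q}(U_\fl)$ by maximality, and then passes (rather tersely) from these finite quotients to $T_\fl(\phi)/u(T_\fl(\phi))$. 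You instead go to the divisible module $\phi[\fl^\infty]$ and run the snake lemma on $0\to T_\fl(\phi)\to F_\fl^r\to\phi[\fl^\infty]\to 0$ with $u$ invertible on the middle term; this buys a cleaner limiting step, and your preliminary argument that $x\nmid m_u(T,x)$ (hence $\det(u)\neq 0$, so $u(T_\fl(\phi))$ is free of the full rank) makes explicit a hypothesis of Theorem \ref{thm:chidet} that the paper's proof leaves implicit. Two small points to add: say a word on why the exact sequence is $u$-equivariant (naturality of the identification $\phi[\fl^\infty]\cong T_\fl(\phi)\otimes_{A_\fl}F_\fl/A_\fl$, using that $u$ commutes with $\phi_\fl$ and with the projections defining $T_\fl(\phi)$), and note that writing $T_\fl(\phi)\cong A_\fl^r$ silently assumes $\fl\neq\fp$, whereas the statement allows $\fl=\fp$; there the Tate module has rank $r-H(\phi)$ and your snake-lemma argument goes through verbatim with that rank, the residual mismatch between the determinant of $u$ on $T_\fp(\phi)$ and the $\det(u)$ fixed in the introduction being an issue the paper's own proof shares rather than one you introduced.
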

\begin{proof}
First, we want to prove that $T_{\fl}(\phi)/u(T_{\fl}(\phi))$ has finite dimension over $\vF_q$ equal to $\dim_{\vF_q}(\phi[\fl^e]\cap \ker(u_{|\overline k}))$. To see this, observe that, for any $d\geq e$, we have that
\[\dim_{\vF_q}(\phi[\fl^{d}]/u(\phi[\fl^{d}]))=\dim_{\vF_q}(\phi[\fl^d]\cap \ker(u_{|\overline k}))=\dim_{\vF_q}(\phi[\fl^e]\cap \ker(u_{|\overline k}))\]
where the last equality follows by the maximality of $U_\fl$. 
By the rank-nullity theorem, we also have that $\dim_{\vF_q}(\phi[\fl^{d}]/u(\phi[\fl^{d}]))=\dim_{\vF_q}(T_{\fl}(\phi)/u(T_{\fl}(\phi)))$.

Now, using Theorem \ref{thm:chidet}, we obtain that
\[v_\fl(\det(u))=v_\fl(\chi_{A_\fl}(T_{\fl}(\phi)/u(T_{\fl}(\phi))))=\deg(\chi_{A_{\fl}}(T_{\fl}(\phi)/u(T_{\fl}(\phi)))/\deg(\fl)\]
\[=\dim_{\vF_q}(T_{\fl}(\phi)/u(T_{\fl}(\phi)))/\deg(\fl)=\dim_{\vF_q}(\phi[\fl^e]\cap \ker(u_{|\overline k}))/\deg(\fl)\]
which concludes the proof.
\end{proof}

We will now use Proposition \ref{prop:l-primary} to scan the primary parts of the kernel of a separable endomorphism.
Recall that an element in $k\{\tau\}$ is said to be \emph{separable} if its degree zero coefficient is different from zero.

\begin{corollary}\label{cor:valuationsep}
Let $k$ be a finite extension of $\vF_q$.
Let $\phi:\vF_q[T]\longrightarrow k\{\tau\}$ be a Drinfeld module.
Let $u\in \End(\phi)$ be separable. Then $\deg(P_{\phi,u}(T,0))=\deg(\det(u))=\deg_{\tau}(u)$.
\end{corollary}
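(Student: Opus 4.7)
The plan is to establish the two equalities in turn. The first, $\deg(P_{\phi,u}(T,0))=\deg(\det(u))$, is immediate because the constant term of the characteristic polynomial on the Tate module is $(-1)^r\det(u)$.

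For the substantive equality $\deg(\det(u))=\deg_\tau(u)$, I would match both sides against the primary decomposition of $\ker(u_{|\overline k})$ as an $A$-module. Separability of $u$ means the $q$-polynomial $u(x)\in\overline k[x]$ is separable of degree $q^{\deg_\tau(u)}$, so it has exactly $q^{\deg_\tau(u)}$ distinct roots, giving $\dim_{\vF_q}(\ker(u_{|\overline k}))=\deg_\tau(u)$. Because $u$ commutes with $\phi_T$, the kernel is $A$-stable, and being finite it decomposes into its $\fl$-primary components $\bigoplus_\fl U_\fl$ over monic primes $\fl$ of $A$, with $U_\fl$ matching the notation of Proposition \ref{prop:l-primary}.

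Now apply Proposition \ref{prop:l-primary} at each prime $\fl$: this gives $v_\fl(\det(u))\deg(\fl)=\dim_{\vF_q}(U_\fl)$. Summing over all primes (only finitely many contribute) produces
\[\deg(\det(u))=\sum_\fl v_\fl(\det(u))\deg(\fl)=\sum_\fl\dim_{\vF_q}(U_\fl)=\dim_{\vF_q}(\ker(u_{|\overline k}))=\deg_\tau(u).\]
The main subtlety I anticipate is the uniform application of Proposition \ref{prop:l-primary} at the characteristic prime $\fp$, where $T_\fp(\phi)$ has rank $r-H(\phi)$ rather than $r$; one needs to verify that the $\fp$-adic valuation of the global $\det(u)\in A$ agrees with that of the local determinant computed on $T_\fp(\phi)$, so that the telescoping sum yields $\deg(\det(u))$ correctly.
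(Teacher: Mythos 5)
Your argument is essentially the paper's proof: the paper likewise decomposes $\ker(u_{|\overline k})$ into its $\fl$-primary pieces $U_\fl$ (there obtained via a global annihilator $g$ with $\phi[g]=\bigoplus_i\phi[\fl_i^{e_i}]$), applies Proposition \ref{prop:l-primary} prime by prime, and sums, with separability giving $\dim_{\vF_q}(\ker(u_{|\overline k}))=\deg_\tau(u)$. The subtlety you flag at $\fl=\fp$ is not given any separate treatment in the paper, since Proposition \ref{prop:l-primary} is stated for every prime $\fl$, including the characteristic, and is invoked uniformly.
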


\begin{proof}
Let the $U_\fl$ be as in Proposition \ref{prop:l-primary}.
First, we want to prove that $\bigoplus_{\fl} U_\fl$ is a decomposition of the kernel of $u$ on $\overline k$. For every element $y\in \ker(u)$ we have that there is a minimal polynomial $g_y\in \vF_q[T]$ such that $\phi_{g_y}(y)=0$ (this simply follows by the fact that the iterations of $\phi_T$ on $y$ orbit in a finite set). Therefore, there is a $g\in \vF_q[T]$ such that $\phi_g(\ker(u))=0$ (for example, the product of all the $g_y$ work). It follows that we can take $g$ of minimal degree such that $\phi_g(\ker(u))=0$. Suppose that $g=\prod^k_{i=1}\fl_i^{e_i}$, for some primes $\fl_i$ and some positive integers $e_i$ and $k$. By construction, we have that 

\[\ker(u)\subseteq \phi[g]=\bigoplus^k_{i=1} \phi[\fl_i^{e_i}].\]

It follows that \[\ker(u)\subseteq \bigoplus^k_{i=1} \phi[\fl_i^{e_i}]\cap \ker(u)\subseteq \bigoplus^k_{i=1} U_{\fl_i}.\]

The other inclusion is obvious since $U_\fl\subseteq \ker(u)$.

  So we have $\bigoplus^k_{i=1} U_{\fl_i}=\ker(u)$. From this it follows that
\[\dim_{\vF_q}(\bigoplus^k_{i=1} U_{\fl_i})=\deg_{\tau}(u)=:n.\] 
Using now Proposition \ref{prop:l-primary}: to deduce we get
\[\deg\det(u)=\sum^k_{i=1} \deg(\fl)v_{\fl}(\det(u))=\sum_{\fl} \dim(U_\fl) =n\]
as wanted.
\end{proof}

We now finish the proof using a trick that allows to transfer the information we obtained in the separable case to the inseparable one using integrality of endomorphisms.

\begin{theorem}\label{thm:degdetval}
Let $k$ be a finite extension of $\vF_q$.
Let $\phi:\vF_q[T]\longrightarrow k\{\tau\}$ be a Drinfeld module. Let $v\in \End(\phi)$. Then, $\deg\det(v)=\deg_{\tau}(v)$, and therefore $-\deg\det(\cdot)$ induces a valuation on $R=\vF_q[v,T]$ which can be extended to a valuation of $\vF_q(v,T)$.
\end{theorem}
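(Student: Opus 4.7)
The plan is to prove the equality $\deg\det(v)=\deg_{\tau}(v)$ for every $v\in\End(\phi)$, from which the valuation statement follows at once: $-\deg_{\tau}$ is manifestly a valuation on any commutative sub-domain of the skew polynomial ring $k\{\tau\}$ (exact additivity is built into $k\{\tau\}$ being a domain, and the ultrametric inequality is immediate from leading-term considerations), and extends to $\mathrm{Frac}(R)=\vF_q(v,T)$ in the standard way. The degenerate case $\deg_{\tau}(v)=0$, i.e.\ $v\in k\cap\End(\phi)$, is immediate since the characteristic polynomial of such a $v$ has coefficients in $\vF_q$, forcing both sides to vanish.

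To prove the upper bound $\deg\det(v)\le s:=\deg_{\tau}(v)$ for $s>0$, I would choose $a\in A$ of small degree (say $r\deg(a)<s$) such that the $\tau^0$-coefficients of $v$ and $\phi_a$ do not sum to zero, thus ensuring that $v+\phi_a$ is separable in $k\{\tau\}$. One can typically take $a\in\vF_q^{\times}$, and increase $\deg(a)$ if $\vF_q$ is small. Then $\deg_{\tau}(v+\phi_a)=s$, and Corollary~\ref{cor:valuationsep} applied to $v+\phi_a$ gives $\deg\det(v+\phi_a)=s$. Writing the characteristic polynomial as $P_{\phi,v}(T,X)=X^r+c_{r-1}(T)X^{r-1}+\cdots+c_0(T)$, a direct computation gives $\det(v+\phi_a)=(-1)^r P_{\phi,v}(T,-a)=(-1)^r\sum_i c_i(T)(-a)^i$; a choice of $a$ avoiding the finitely many cancellations in the leading $T$-coefficient of this expression then yields $\deg_T c_i\le s$ for all $i$, and in particular $\deg\det(v)=\deg_T c_0\le s$.

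For the matching lower bound I would invoke integrality. By Theorem~\ref{integrality}, $v$ satisfies a monic minimal polynomial $m_v(T,X)=X^d+b_{d-1}X^{d-1}+\cdots+b_0\in A[X]$ with $b_0\neq 0$. Rewriting the Cayley relation as $\phi_{b_0}=-v\cdot w$ with $w:=v^{d-1}+b_{d-1}v^{d-2}+\cdots+b_1\in\End(\phi)\setminus\{0\}$, and noting that $\phi_{b_0}$ acts on the Tate module as scalar multiplication by $b_0$ (so $\det(\phi_{b_0})=b_0^r$ and $\deg_{\tau}(\phi_{b_0})=r\deg(b_0)$), multiplicativity of both $\deg\det$ and $\deg_{\tau}$ on products gives
\[
\deg\det(v)+\deg\det(w)=r\deg(b_0)=\deg_{\tau}(v)+\deg_{\tau}(w).
\]
Setting $\varepsilon(u):=\deg\det(u)-\deg_{\tau}(u)$, this reads $\varepsilon(v)+\varepsilon(w)=0$; since the previous paragraph applies equally to $v$ and to $w$ (with $\varepsilon(w)=0$ handled trivially if $\deg_{\tau}(w)=0$), we have $\varepsilon(v),\varepsilon(w)\le 0$, and combined with $\varepsilon(v)=-\varepsilon(w)$ we conclude $\varepsilon(v)=0$. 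The main obstacle I anticipate is ensuring non-cancellation in the upper-bound step when $\vF_q$ is small, which may require a more careful choice of $a$ (e.g.\ of higher degree, so that the monomial ranges contributing to $P_{\phi,v}(T,-a)$ in $T$ separate cleanly); the rest is a clean multiplicative bootstrap from the separable case.
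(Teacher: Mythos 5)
Your overall strategy is genuinely different from the paper's: you reduce to the separable case by translating $v\mapsto v+\phi_a$ and then close the argument with a multiplicative bootstrap from the Cayley relation $\phi_{b_0}=-vw$, using multiplicativity of $\det$ and additivity of $\deg_\tau$ to get $\varepsilon(v)+\varepsilon(w)=0$. That bootstrap is correct and attractive, but it only concludes once you know $\varepsilon(u)=\deg\det(u)-\deg_\tau(u)\le 0$ for \emph{both} $v$ and $w$, so everything rests on your upper-bound step, and that step has a genuine gap. To keep $\deg_\tau(v+\phi_a)=s$ you must have $r\deg(a)<s$; when $s\le r$ (which already includes the Frobenius $\pi=\tau^n$ with $n\le r$) this forces $a\in\vF_q^\times$, and then your non-cancellation requirement is that the polynomial $\sum_{i}\gamma_i X^i$ (with $\gamma_i$ the coefficients of $T^N$ in the $c_i$, $N=\max_i\deg_T c_i$) does not vanish at some admissible $-a\in\vF_q^\times$. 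This polynomial has degree up to $r$, so when $q\le r+2$ (e.g.\ $q=2$, $r$ large) it can vanish at every available $a$ --- for instance any multiple of $X^{q-1}-1$ does --- and no admissible constant $a$ need exist. Your proposed fallback, ``increase $\deg(a)$ so the monomial ranges separate,'' does not repair this: if $r\deg(a)>s$ then $\deg_\tau(v+\phi_a)=r\deg(a)$ and Corollary~\ref{cor:valuationsep} only yields $\max_i(\deg_T c_i+i\deg(a))=r\deg(a)$, i.e.\ $\deg_T c_i\le (r-i)\deg(a)$, which is vacuous for large $\deg(a)$; and for intermediate degrees the coefficient of $T^N$ mixes lower-order coefficients of the $c_i$ with those of $a^i$, so the existence of a non-cancelling $a$ over a small field is again not established. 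Also note that a single good $a$ gives you $\max_i\deg_T c_i\le s$ only if the cancellation is avoided at the top $T$-coefficient; with too few evaluation points you cannot even separate the individual $c_i$'s.

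This is exactly the difficulty the paper's proof is designed to sidestep. There, Proposition~\ref{thm:switch} reverses the roles of $v$ and $\phi_T$: since $\phi_T$ is integral over $\vF_q[x]$ (Theorem~\ref{integrality} applied to $\psi_x=v$, together with Theorem~\ref{thm:integrality}), the minimal polynomial $m(T,x)$ is, up to a constant, monic in $T$, so the top $T$-coefficient of $m(T,x)$ lies in $\vF_q$ and hence survives evaluation at \emph{any} $x_0$; in particular $\deg_T m(T,1)=\deg_T m(T,0)$, which is precisely the non-cancellation statement you are trying to force by varying $a$, obtained uniformly and with the single translate $v-1$. So to complete your argument you would either need to import that switch/integrality observation (at which point your translation-plus-bootstrap machinery becomes unnecessary, since the paper's shorter route $\deg_\tau(u)=\deg_\tau(u-1)=\deg\det(u-1)=\deg\det(u)$ then closes the inseparable case directly), or supply a genuinely new argument producing a non-cancelling $a$ over small $\vF_q$, which you have not done.
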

\begin{proof}
Since $-\deg_\tau(\cdot)$ is a valuation on $R$ (i.e. it verifies the axioms of valuations on the subring $R$), and $\vF_q(v,T)$ is the fraction field of $R$, showing that $\deg\det(\cdot)=\deg_{\tau}(\cdot)$ on $R$ is enough. For a separable $u\in R$ we have that 
$\deg\det(u)=\deg_{\tau}(u)$.
For an inseparable $u$ we can reduce to the separable case as it follows: if $u$ is inseparable, then $u-1$ is separable and Corollary \ref{cor:valuationsep} applies, so we have
\[\deg_{\tau}(u)=\deg_\tau(u-1)=\deg\det(u-1).\]

If we can prove that $\deg\det(u-1)=\deg\det(u)$ we are done.  The determinant of $u-1$ is simply the characteristic polynomial of $u$ evaluated at $1$.
We know that the minimal polynomial $m(T,x)$ for $u$ over $\vF_q[T]$ is the same as the minimal polynomial of $\phi_T$ as an endomorphism of $\psi:x\mapsto u$. But we also know that $\phi_T$ is integral over $\vF_q[x]$, when we look at it as an endomorphism of $\psi$. This implies that the largest degree term in $T$ of $m(T,x)$ must have an invertible coefficient in $\vF_q[x]$, i.e. an element of $\vF_q$.
Now write 
\[m(T,x)=x^{\deg_x(m(T,x))}+\sum^{\deg_x(m(T,x))-1}_{i=0}f_i(T)x^i\]
for some $f_i(x)\in \vF_q[x]$. The previous consideration on integrality of $\phi_T$ forces that the largest degree term in $T$ must appear in $f_0(T)$, and not in any other $f_i(T)$. Therefore, $\deg(m(T,1))=\deg(m(T,0))$.
It follows that
\[\deg(\det(u-1))=\deg(m(T,1)^e)=\deg(a_0(T)^e),\] 
proving the claim.
\end{proof}

Let $M$ be a  finite extension of $K$. For any $y\in M$  the \emph{norm} $N_{M:K}$ of $y$ is defined as the determinant of the multiplication by $y$ in $M$, where the multiplication by $y$ is considered as a $K$-linear map. If $F\subseteq K\subseteq M$, recall that the norm satisfies
\[N_{M:F}=N_{K:F}\circ N_{M:K}.\]

Set now $K=\vF_q(T)$ and $N=N_{M:K}$. The  pseudo-absolute value of $y$ is defined as $|y|=q^{-v_{\infty}(N(y))}$, where $v_\infty$ is the usual valuation at $\infty$ of $\vF_q(T)$. Notice that if $y$ is integral over $\vF_q[T]$, then $|y|=q^{\deg(N(y))}$. Notice that in principle, $|\cdot|$ might not be an absolute value.

\begin{lemma}\label{lemma:uniqueplaceourcase}
Let $\phi:\vF_q[T]\rightarrow k\{\tau\}$ be a Drinfeld module and $u$ be an endomorphism of $\phi$. Set $L=\vF_q(T,u)$. The map $y\mapsto v_{\infty}(N_{L:\vF_q(T)}(y))$ is a valuation.
In particular, there is only one place $\tilde\infty$ of $L$ lying above $\infty$.
\end{lemma}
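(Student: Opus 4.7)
The plan is to combine Theorem \ref{thm:degdetval}, which asserts that $-\deg\det(\cdot)$ is a valuation on $\vF_q(u,T)=L$, with Lemma \ref{lemma:techsplit}, which reduces the ``in particular'' claim to the valuation statement. Thus the heart of the argument is to show that $y\mapsto v_\infty(N_{L:\vF_q(T)}(y))$ agrees, up to a positive rational scalar, with $-\deg\det(\cdot)$ on $L$.

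First I would work on the subring $\vF_q[u,T]\subseteq\End(\phi)$. For $y$ in this subring, Theorem \ref{thm:charpol} gives $P_{\phi,y}(T,x)=m_y(T,x)^{r/[F(y):F]}$, where $F=\vF_q(T)$ and $m_y$ is the minimal polynomial of $y$ over $F$. Taking constant terms in $x$ (and noting $\det(y)=(-1)^r P_{\phi,y}(T,0)$) yields $\deg\det(y)=(r/[F(y):F])\,\deg m_y(T,0)$. On the other hand, the tower law for norms gives $N_{L:F}(y)=N_{F(y):F}(y)^{[L:F(y)]}=\pm\, m_y(T,0)^{[L:F(y)]}$, hence $\deg N_{L:F}(y)=[L:F(y)]\,\deg m_y(T,0)$. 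Since $v_\infty(p)=-\deg p$ for $p\in\vF_q[T]$, combining these identities yields
\[
v_\infty\bigl(N_{L:F}(y)\bigr)=\frac{[L:F]}{r}\bigl(-\deg\det(y)\bigr),
\]
a positive rational multiple of $-\deg\det(y)$ (note $[L:F]=[F(u):F]$ divides $r$ by Theorem \ref{thm:charpol}, so the scalar is well-defined and strictly positive). Because $-\deg\det$ satisfies the valuation axioms on $\vF_q[u,T]$, these axioms are preserved under positive scaling, so $v_\infty\circ N_{L:F}$ is a valuation on $\vF_q[u,T]$.

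To conclude, I would extend to $L$ using that $L=\vF_q(u,T)$ is the fraction field of $\vF_q[u,T]$, and that both $N_{L:F}$ and $\det$ are multiplicative, so the proportionality above passes consistently to quotients $a/b$ via $v(a/b)=v(a)-v(b)$. This exhibits $v_\infty\circ N_{L:F}$ as a valuation on all of $L$, and the ``in particular'' claim is then immediate from Lemma \ref{lemma:techsplit}. The main bookkeeping obstacle is the exponent matching in the second paragraph---connecting the exponent $[L:F(y)]$ coming from the norm tower with the exponent $r/[F(y):F]$ coming from Theorem \ref{thm:charpol}---which works precisely because their ratio $[L:F]/r$ is a single positive rational independent of $y$.
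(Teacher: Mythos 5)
Your proposal is correct and follows essentially the same route as the paper: the tower law $N_{L:F}(y)=N_{F(y):F}(y)^{[L:F(y)]}$, the identification of the norm with $\pm m_y(T,0)$ and of $\det(y)$ with $\pm m_y(T,0)^{r/[F(y):F]}$ via Theorem \ref{thm:charpol}, giving $v_\infty(N_{L:F}(y))=\frac{[L:F]}{r}\bigl(-\deg\det(y)\bigr)$ on $\vF_q[u,T]$, then invoking Theorem \ref{thm:degdetval} and Lemma \ref{lemma:techsplit}. The exponent bookkeeping you flag is exactly the chain of equalities in the paper's displayed computation, so nothing further is needed.
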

\begin{proof}
Set $F=\vF_q(T)$.
We want to use Lemma \ref{lemma:techsplit} on the extension $L:F$, so we need to show that $v(\cdot)=-\log_q|\cdot|$ is a valuation, where $|\cdot|$ is the pseudo-absolute value defined earlier. It is enough to show that $v$ is a valuation on $R=\vF_q[u,T]$ because $L$ is the fraction field of $R$.
We know that $-\log_q|\cdot|=v_{\infty}(N_{L:F}(\cdot))$. Let us denote by $m^F_y$ the minimal polynomial of $y$ over $F$.
Therefore, for any $y\in R$ we have that
\begin{align*}
v_{\infty}(N_{L:F}(y))=& v_{\infty}(N_{F(y):F}(N_{L:F(y)}(y)))\\
					  =& v_{\infty}(N_{F(y):F}(y)^{[L:F(y)]})\\
					  =& -[L:F(y)]\deg_T(N_{F(y):F}(y))\\
					  =& -[L:F(y)]\frac{[F(y):F]}{r}\deg_T(N_{F(y):F}(y)^{r/[F(y):F]})\\
					  =& -\frac{[L:F]}{r}\deg_T(m^F_y(0)^{r/[F(y):F]})\\
					  =& -\frac{[L:F]}{r}\deg_T(\det(y))\\
					  =& -\frac{[L:F]}{r}\deg_{\tau}(y)
\end{align*}

Thanks to Theorem \ref{thm:degdetval} we know that $-\deg_\tau=-\deg\det$ induces a valuation on $\vF_q(u,\phi_T)$, and so does also $v$, as it is simply a (positive) constant (i.e. 
$[L:K]/r$) times a valuation on $R$ (i.e. $-\deg_\tau(\cdot)$). This also shows that $v_{\infty}(N_{L:K}(y))$ is a valuation.
\end{proof}

\begin{proposition}\label{prop:technical}
Let $R=\vF_q[T,u]$ and let $\tilde \infty$ be the place lying above $\infty\subset \vF_q(T)$, which we suppose having ramification index equal to $e$. The map $-\deg\det:R\longrightarrow \vZ$ induces a valuation on the fraction field $L$ of  $R$.
Moreover, the valuations given by $v_{\tilde \infty}$ and $v_\tau(r_1/r_2)=\deg_\tau(r_2)-\deg_\tau(r_1)$ are both equivalent to the one induced by $-\deg\det$. In particular, on $\vF_q[u,T]$ they verify the following
\[v_\tau(\cdot):=-\deg_\tau(\cdot)=-\deg(\det(\cdot))=(r/e)v_{\tilde \infty} (\cdot).\]
Set $\pi=\tau^n$, where $[k:\vF_q]=n$, then $v_{\tilde \infty}(\pi)=-en/r$.
\end{proposition}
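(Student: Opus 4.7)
The plan is to derive the three assertions in order, each time reducing to Theorem \ref{thm:degdetval} and Lemma \ref{lemma:uniqueplaceourcase}. For the first claim, Theorem \ref{thm:degdetval} already tells us that $-\deg_\tau = -\deg\det$ is a valuation on $R = \vF_q[T,u]$, and extends by the usual rule $v(r_1/r_2) = v(r_1) - v(r_2)$ to a valuation on the fraction field $L = \vF_q(T,u)$. This also identifies $v_\tau = -\deg\det$ on $R$, so half of the equivalence claim is automatic.

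For the equivalence with $v_{\tilde\infty}$, the key step is to nail down the proportionality constant. Lemma \ref{lemma:uniqueplaceourcase} guarantees both that $\tilde\infty$ is the only place of $L$ above $\infty$, and that $w(y) := v_\infty(N_{L:F}(y))$ is a valuation on $L$, where $F = \vF_q(T)$. Uniqueness of $\tilde\infty$ forces $w$ to be a positive multiple of $v_{\tilde\infty}$; to read off the ratio I would evaluate both sides on any $y \in F^*$ with $v_\infty(y) \neq 0$, using $v_{\tilde\infty}|_F = e\, v_\infty$ and $w|_F = [L:F]\, v_\infty$ to obtain $w = ([L:F]/e)\, v_{\tilde\infty} = f\, v_{\tilde\infty}$, where $f$ denotes the residue degree of $\tilde\infty \mid \infty$. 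Substituting this into the identity $w(y) = -([L:F]/r)\deg_\tau(y) = -(ef/r)\deg_\tau(y)$ extracted directly from the chain of equalities inside the proof of Lemma \ref{lemma:uniqueplaceourcase}, and cancelling the common factor $f$, yields $v_{\tilde\infty}(y) = -(e/r)\deg_\tau(y)$ on $R$, i.e.\ $v_\tau = (r/e)\, v_{\tilde\infty}$; the identity then propagates to $L$ because both sides are valuations and $L$ is the fraction field of $R$. The stated chain of equalities and the equivalence of all three valuations follow at once.

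For the final assertion, I would simply apply this identity to $\pi = \tau^n$. Since $\deg_\tau(\pi) = n$ by definition, we get $v_\tau(\pi) = -n$, hence $v_{\tilde\infty}(\pi) = (e/r)\, v_\tau(\pi) = -en/r$, which is exactly the quantity claimed.

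The only nonroutine step I foresee is the extraction of the proportionality constant $f = [L:F]/e$: it amounts to the standard local--global norm formula $v_\infty \circ N_{L:F} = \sum_{w \mid \infty} f_w v_w$, which collapses to a single term in our setting precisely because $\tilde\infty$ is the unique place above $\infty$. Once this is in hand, everything else is a substitution and a cancellation.
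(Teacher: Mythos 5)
Your argument is correct and follows the same skeleton as the paper's proof: Theorem \ref{thm:degdetval} makes $-\deg\det=-\deg_\tau$ a valuation on $L$, and the uniqueness of the place above $\infty$ coming from Lemma \ref{lemma:uniqueplaceourcase}, together with the classification of valuations on a function field, forces proportionality to $v_{\tilde\infty}$; the only divergence is how the proportionality constant is pinned down. The paper does this in one line by evaluating both valuations at the single element $T$: since $\deg_\tau(\phi_T)=r$ and $v_{\tilde\infty}(T)=e\,v_\infty(T)=-e$, one gets $-r=-c'e$, i.e.\ $c'=r/e$. You instead route through the norm valuation $w=v_\infty\circ N_{L:F}$, determining $w=([L:F]/e)\,v_{\tilde\infty}$ by restricting to $F$ and then combining with the identity $w=-([L:F]/r)\deg_\tau$ extracted from the proof of Lemma \ref{lemma:uniqueplaceourcase}. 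This is valid; note only that your identification of $[L:F]/e$ with the residue degree $f$ (and hence the appeal to the local--global norm formula) is superfluous: the factor $[L:F]$ cancels on both sides directly, so no fundamental identity or separability consideration is needed. Both computations yield $v_\tau=(r/e)\,v_{\tilde\infty}$ on $R$, hence on $L$, and therefore $v_{\tilde\infty}(\pi)=-(e/r)\deg_\tau(\tau^n)=-en/r$, as claimed.
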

\begin{proof}
Since there is only one valuation at infinity of $\vF_q(u,T)$ we necessarily have that $-\deg_\tau(\cdot)=c'v_{\tilde\infty}(\cdot)$. To find the proportionality factor we apply the maps to $T$, getting $-r=- c'e$.
\end{proof}

\section{The proof of the Riemann Hypothesis for Drinfeld Modules}\label{sec:proofRH}

Finally, using the technical core, we are ready to conclude the proof in a standard fashion using symmetric polynomials.

\begin{proof}[Proof of Theorem \ref{thm:RH}]
Set $F=\vF_q(T)$.
First, we need to show that $|\cdot|_*$ is an absolute value. It is enough to show that $v:y\mapsto v_{\infty}(N_{F(y):F}(y))/[F(y):F]$ is a valuation on $R=\vF_q[T,u]$. Let $y\in R$. Now simply observe that 
\[
v_{\infty}(N_{F(y):F}(y)^{[L:F(y)]})/([L:F(y)][F(y):F])=v_{\infty}(N_{L:F}(y))/[L:F]\]
Since we already proved that $v_{\infty}(N_{L:F}(y))=-\deg(N_{L:F}(y))$ is a valuation (see Lemma \ref{lemma:uniqueplaceourcase}) we have the wanted claim.

Let us now prove (3).
Suppose that $a_0$  is divisible by an irreducible polynomial $\fq\neq \fp$. Let us consider the map given by $\pi:T_\fq(\phi)\rightarrow T_\fq(\phi)$. The determinant of such $A_{\fq}$-linear map is, up to sign, equal to  $P_{\pi,\phi}(T,0)=a_0$ (independently of $\fq$, because the characteristic polynomial of an endomorphism on the $\fq$-adic Tate module is independent of $\fq$, if $\fq$ is chosen different from the characteristic, see Theorem \ref{thm:charpol}).
Using now Proposition \ref{thm:reduction_charpol} we obtain that 
\[\det(\pi_{|\phi[\fl]})= \pm P_{\pi,\phi}(T,0)\mod \fl=0.\]
But this is impossible because $\pi$ is a bijection on $\overline k$, and therefore on $\phi[\fl]$. This shows that the only prime dividing $a_0$ is $\fp$.
Notice that now it is simply enough to prove that $\deg_T(P_{\phi,\pi}(T,0))=n$, but this is immediate:  $\deg_\tau(\pi)=n=\deg(\det(\pi))=\deg_T(P_{\phi,\pi}(T,0))$.

Let us prove (1).  We are asking if
\[|\alpha|_*=q^{-v_{\infty}(N(\alpha))/[\vF_q(\alpha,T):\vF_q(T)]}=q^{n/r}.\]
Since $N(\alpha)=N(\pi)$ and $[\vF_q(\pi,T):\vF_q(T)]=[\vF_q(\alpha,T):\vF_q(T)]$ (because $m_{\pi}(T,x)$ is irreducible, and $\alpha$ is also a root of $m_{\pi}(T,x)$), it is enough to show that 
\[-v_{\infty}(N(\pi))/[\vF_q(\pi,T):\vF_q(T)]=n/r.\]
Which is equivalent to show that
\[\deg(N(\pi)^{r/[\vF_q(\pi,T):\vF_q(T)]})=n.\]
which is correct, because $N(\pi)^{r/[\vF_q(\pi,T):\vF_q(T)]}=\det(\pi)=c\fp^{n/d}$ for some $c\in \vF_q$.

The item (2) is a relatively immediate consequence of (1) by observing that the $i$-th symmetric polynomial in $r$ variables has degree $i$.
Let us see why.
Let $\overline \infty$ be a place of $M$ lying above $\infty\subseteq \vF_q(T)$. 
Observe that \[\deg_T(\cdot)=-v_{\infty}(\cdot)=v_{\overline \infty}(\cdot)/e(\overline \infty\mid \infty).\]
So we have
\begin{align*}
\deg_T(s_i(\alpha_1,\dots,\alpha_r))=&-v_{\overline \infty}(s_i(\alpha_1,\dots,\alpha_r))/e(\overline \infty\mid \infty)\\
\leq& -\min_{T^{(r-i)}\subseteq \{1,\dots,r\}}v_{\overline \infty}\left(\prod_{t\in T^{(r-i)}} \alpha_{t}
\right)/e(\overline \infty\mid \infty)
\end{align*}
where $T^{(r-i)}$ runs over all subsets of size $r-i$.

\begin{align*}
\deg_T(s_i(\alpha_1,\dots,\alpha_r))\leq & -\min_{T^{(r-i)}\subseteq \{1,\dots,r\}} \sum_{t\in T^{(r-i)}}v_{\overline \infty}(\alpha_t)/e(\overline \infty\mid \infty)\\
\leq& -\min_{T^{(r-i)}\subseteq \{1,\dots,r\}} \sum_{t\in T^{(r-i)}}v_{\tilde \infty}(\alpha_t)e(\overline \infty \mid \tilde \infty )/e(\overline \infty\mid \infty)\\
\leq & -\min_{T^{(r-i)}\subseteq \{1,\dots,r\}} \sum_{t\in T^{(r-i)}}v_{\tilde \infty}(\alpha_t)/e(\tilde \infty \mid \infty)\\
\leq & -\min_{T^{(r-i)}\subseteq \{1,\dots,r\}} \sum_{t\in T^{(r-i)}}v_{\tilde \infty}(\pi)/e(\tilde \infty \mid \infty)\\
\leq& -\min_{T^{(r-i)}\subseteq \{1,\dots,r\}} \sum_{t\in T^{(r-i)}}-n/r=(r-i)n/r
\end{align*}

which is the claim.

\end{proof}

\section{Acknowledgements}
We thank Mihran Papikian for very interesting discussions that greatly improved the proofs in this manuscript.
This work was supported by NSF grant number 2338424.

\renewcommand{\refname}{References}
\bibliographystyle{unsrt}
\bibliography{ref}{}

\begin{thebibliography}{1}

\bibitem{drinfel1977elliptic}
Vladimir~G Drinfel'd.
\newblock Elliptic modules. ii.
\newblock {\em Mathematics of the USSR-Sbornik}, 31(2):159, 1977.

\bibitem{laumon1996cohomology}
G{\'e}rard Laumon.
\newblock {\em Cohomology of Drinfeld modular varieties, part 1, geometry,
  counting of points and local harmonic analysis}.
\newblock Cambridge University Press, 1996.

\bibitem{Papikian2023}
Mihran Papikian.
\newblock {\em Drinfeld Modules}.
\newblock Springer International Publishing, 2023.

\bibitem{yu1995isogenies}
Jiu-Kang Yu.
\newblock Isogenies of drinfeld modules over finite fields.
\newblock {\em Journal of Number Theory}, 54(1):161--171, 1995.

\bibitem{anderson1986t}
Greg~W Anderson.
\newblock $ t $-motives.
\newblock {\em Duke Math. J.}, 53(1):457--502, 1986.

\bibitem{angles1997some}
Bruno Angles.
\newblock On some characteristic polynomials attached to finite {D}rinfeld
  modules.
\newblock {\em {M}anuscripta {M}athematica}, 93(1):369--379, 1997.

\bibitem{Stichtenoth}
Henning Stichtenoth.
\newblock {\em Algebraic Function Fields and Codes}.
\newblock Springer Publishing Company, Incorporated, 2nd edition, 2008.

\end{thebibliography}

\end{document}